\documentclass[a4paper,UKenglish]{lipicsown}

\usepackage{tikz}
\usepackage{url}
\usepackage{color}
\usepackage{hyperref}
\hypersetup{colorlinks   = true,
     citecolor    = black,
     urlcolor     = violet,
     linkcolor    = blue}

\usepackage{tikz}
\usetikzlibrary{arrows,automata}

\usepackage{amsfonts}
\usepackage{amssymb}
\usepackage{amsmath}
\usepackage{mathtools}
\usepackage{float}
\usepackage{tikz}

\newtheorem{claim}[theorem]{Claim}

\newtheorem{algo}[theorem]{Algorithm}
\newtheorem{rem}[theorem]{Remark}

\newtheorem{defn}[theorem]{Definition}

\title{Chains and Antichains inside Many-One Degrees and Variants}

\keywords{Structures inside degrees;
one-one degree; finite-one degree; bounded finite-one degree;
many-one degree; infinite antichains.}

\author[2]{Linus Richter$^1$, Frank Stephan$^{2,3}$ and Xiaoyan Zhang}
\affil[1]{Flinders University, College of Science and Engineering, Level 3,
Tonsley Building 1, 1284 South Road, Tonsley SA 5042, Australia;
\texttt{linus.richter@flinders.edu.au}.
Linus Richter did most of his work on this paper while he worked for
the Department of Mathematics, National University of Singapore, during
his previous employment.}
\affil[2]{Department of Mathematics,
National University of Singapore,
10 Lower Kent Ridge Road, Block S17, Singapore 119076,
Republic of Singapore,
\texttt{fstephan@nus.edu.sg} and \texttt{zhangxy@u.nus.edu}.}
\affil[3]{School of Computing, National University of
Singapore, 13 Computing Drive, Block COM1, Singapore 117417,
Republic of Singapore.}

\authorrunning{L.~Richter, F.~Stephan and Xy.~Zhang}

\titlerunning{Chains and Antichains inside Many-One Degrees and Variants}

\date{\today}

\begin{document}
\maketitle

\begin{abstract}
The relations between many-one degrees and one-one degrees have been
studied since the beginning of recursion theory; early results from
the 1960s include that many-one degrees always have a largest
one-one degree and either that one-one degree is the only one-one degree
inside the many-one degree or every countable linear order is noneffectively
embeddable into the structure of one-one degrees inside the given
many-one degree. Furthermore, the greatest recursive many-one degree
is a special case, as it allows to embed ascending infinite chains but
not descending infinite chains, all other many-one degrees fall into the
two cases mentioned above. It remained open whether infinite antichains
can always be embedded when the many-one degree is nonrecursive and
nonirreducible; Odifreddi stated in his survey from the year 1981 and
in his book Classical Recursion Theory in the
year 1989 this question explicitly as an open problem. D\"egtev
had already in 1976 constructed antichains of one-one degrees inside all
nonrecursive and nonirreducible recursively enumerable many-one degrees
and Batyrshin generalised the result to all nonrecursive and nonirreducible
limit-recursive many-one degrees. Recently, Cintioli \cite{Ci2026}
showed that there is a measure $1$ class of sets whose many-one degrees
contain infinite antichains of one-one degrees. This class contains
all rigid many-one degrees. The present work generalises Batyrshin's
result to all nonrecursive and nonirreducible many-one degrees and
solves therefore Odifreddi's open problem.

The present work also proposes to deepen the study of reducibilities between
one-one and many-one in recursion theory in order to get a more complete
and detailed picture for the structures inside many-one degrees.
It namely proposes to study in more detail than before the
finite-one and bounded finite-one degrees. Odifreddi's Open Problem
is solved by showing that every nonrecursive finite-one degree which
does not coincide with the greatest one-one degree in a many-one degree
contains an infinite antichain of one-one degrees and furthermore allows
to embed any recursive partial order effectively into the structure of
one-one degrees inside the finite-one degree.
This is done by starting with a representative $A$ of
the finite-one degree and then constructing an array $B_0,B_1,B_2,\ldots$
of sets given by finite-one reductions to $A$ which are also all
one-one above $A$ and which form an antichain or embed a given
recursive partial order. In contrast to this, there are nonrecursive
bounded finite-one degrees consisting of a linearly ordered set of
one-one degrees without any incomparable pair of one-one degrees inside it.
Furthermore, some initial results about the structure of finite-one degrees
inside many-one degrees are obtained.
\end{abstract}

\medskip
\noindent
{\small {\bf Funding.}
Partial support to Frank Stephan and full support to Linus Richter
during the time of his past employment at the National University of
Singapore were provided by the Singapore Ministry of Education via
Academic Research Fund Tier 2 grant MOE-000538-01; Xiaoyan
Zhang started to work on the topic of this paper for an UROPS
(Under\-graduate Research Opportunity Programme in Science at NUS)
while he was an exchange student in Singapore and continued with
the work while completing the Masters at the Institute of Software
of the Chinese Academy of Sciences and he acknowledges an internship
at Workforce Optimizer Pte Ltd after enrolling
at the National University of Singapore as a PhD student.}

\section{Introduction}

\noindent
Post \cite{Po1944} investigated in his paper the classical reducibilities like
many-one, truth-table and Turing in order to determine which of them had
intermediate recursively enumerable degrees besides the recursive degree and
the degree of the halting problem; he answered it positively for all strong
degrees, but left it open for Turing reducibility. The strongest form of
reducibility are the many-one reducibility and their variants. Here $f$
reduces $A$ to $B$ iff for all $x$, $x \in A \Leftrightarrow f(x) \in B$;
the variants satisfy the additional request that $f$ is one-one or
finite-one. In all cases, for recursion theory, $f$ has to be a
recursive function. The work initiated by Post led to a comprehensive body
of research comparing and relating strong reducibilities and their degrees.
Odifreddi surveyed in an article \cite{Od1981} and in
his books ``Classical Recursion Theory''
\cite{Od1989,Od1999} also the structure of one-one degrees inside
many-one degrees; the first book concentrated on general degrees while
the second book specialised at limit-recursive and recursively enumerable
degrees.

Young \cite{Yo1966b} showed in 1966 that there are two base-cases
for many-one degrees, either they consist of a single one-one degree
or they consist of infinitely many one-one degrees. For example,
the many-one degree of the empty set or of the
full set of natural numbers $\mathbb N$ consist of a single set
and thus a single one-one degree. Myhill \cite{My1955} had shown
already in 1955 that the many-one degree of the halting problem consists
of sets which are pairwise equivalent by a recursive bijection \cite{My1955},
thus the many-one degree of the halting problem is a single one-one degree;
such many-one degrees are called irreducible.
On the other hand, simple sets as introduced by Post \cite{Po1944}
satisfy that their many-one degree consists of infinitely many one-one
degrees. Young \cite{Yo1966b} furthermore showed that the recursive
and nonrecursive nonirreducible many-one degrees differ. While the
greatest recursive many-one degree allows only to embed linear orders
isomorphic to either a finite ordering or to the natural numbers with their
default ordering or to the natural numbers with their default ordering plus
one element above them, every nonrecursive nonirreducible many-one degree
allows to embed any countable linear order into the structure of one-one
degrees which it contains. In particular the countable dense linear order is
embeddable into the one-one degrees inside any given nonirreducible and
nonrecursive many-one degree. Young worked also on other aspects of
many-one and one-one degrees \cite{Yo1964,Yo1966a}.

Rogers \cite{Ro1967} showed that every many-one degree contains a greatest
one-one degree and this one-one degrees consists of all the cylinders
in the many-one degree. Here, a cylinder is a set $A$ which is one-one
equivalent to the Cartesian product of $A$ with the set of the natural
numbers. Dekker and Myhill \cite{DM1960} showed that there are many-one
degrees without a least one-one degree, examples of these are the many-one
degrees of simple sets. Furthermore, the greatest recursive many-one degree
has two minimal one-one degrees, the singleton sets and their complements;
this degree indeed contains antichains of length $2$ but not of length $3$.
One of the corollaries to the results in the present work is that this
degree is also the only many-one degree with this property.
Motivated by Young's result on the embeddability of countable linear orders,
which did not include an embeddability result for countable partial orders,
D\"egtev \cite{De1976} showed that recursively enumerable nonrecursive and
nonirreducible many-one degrees can embed countable partial orders including
infinite antichains into the structure of one-one degrees inside them.

This motivated Odifreddi \cite[Open Problem 5]{Od1981} to ask explicitly
whether every nonrecursive and nonirreducible manyone degree contains an
infinite antichain of one-one degrees. Batyrshin \cite{Ba2021} confirmed
this for limit-recursive many-one degrees. Here a limit-recursive set
is a set Turing reducible to the halting problem $K$. The main result
of this paper is to generalise this result to all many-one degrees,
thus answering Odifreddi's question from 1981 affirmatively.

Furthermore, the present work analyses
the situation when taking the intermediate finite-one degrees between the
one-one degrees and many-one degrees into account. These have only be
rarely investigated; however, the authors and their overseas colleagues
were able to track the following references to finite-one reductions
in recursion theory and computational complexity.
Maslova \cite{Ma1979} studied recursively enumerable finite-one
degrees in 1979 and showed that every recursively enumerable many-one
degree which is neither recursive nor irreducible contains an antichain of
finite-one degrees; she also showed that inside a recursively enumerable
many-one degree, the simple sets form a least finite-one degree
in the case that they exist in the degree. The classes of simple,
hypersimple and effectively simple sets are closed under finite-one
reducibility. The three bounded versions of many-one reducibility
investigated by Maslova are the $bm$-reducibility (called finite-one
reducibility in this paper) and $bm'$-reducibility (where some partial
recursive function bounds the number of preimages mapped to a given image)
and $bm''$ reducibility where some recursive function computes for each $y$
how many preimages it has.
Maslova mentioned work of Gerhard Lischke in computational
complexity on variants of the finite-one reducibility, without
giving a precise reference in which works he published it; perhaps
he only discussed the work with Maslova and did not publish it.
Lischke's early work deals with ways to measure computational complexity
in a natural way and might therefore also have touched reducibilities,
see for example the following works of Lischke, one joint with Berger
\cite{BL1977,Li1975,Li1976,Li1977}.

In the 1990ies
the intermediate reducibilties captured the attention of Edith and Lane
Hemaspaandra and they published their findings in a note in
Theoretical Computer Science \cite{HH1994}. Bazhenov, Mustafa and
Ospichev \cite{BMO2019} carried over the study of
finite-one reducibility from a study of reducibility between sets
to a study of reducibilities between numberings of uniformly recursively
enumerable sets. The next paper using finite-one reducibility is that of
Kjos-Hanssen and Webb \cite{KW2021}; they used it as a tool to study
various forms of randomness and were not able to track down any of the
previous papers addressing the topic. Furthermore, Webb \cite{We2022}
also treats this reducibility in his PhD thesis. There have been various names
for finite-one reducibility in the literature: Maslova \cite{Ma1979}
called it ``bounded many-one reducibility'' and defined three versions
$bm$, $bm'$ and $bm''$ among which finite-one reducibility is the first
and the third just says that there is a recursive function which computes
for every $y$ how many $x$ are mapped by the reduction to $y$---for this,
their number has of course to be finite, but it might be $0$.
Bounded finite-one reducibility implies $bm$ and $bm'$ and is incomparable
to $bm''$. Infinite and coinfinite recursive sets are one-one equivalent
and $bm''$-equivalent. Bazhenov, Mustafa and Ospichev \cite{BMO2019}
studied finite-one reducibility and called it bounded reducibility for
a reducibility between numberings. Kjos-Hanssen and Webb \cite{KW2021,We2022}
called it finite-to-one reducibility. In the present paper, the authors
use the word ``finite-one reducibility'' which is parallel to the words
``one-one reducibility'' and ``many-one reducibility'' used by
Lachlan \cite{La1969}, Moschovakis \cite{Mo1966} and Young~\cite{Yo1966b}.
Furthermore, bounded finite-one reducibility of the present work could also
be called ``constant-bounded many-one reducibility'', if one follows
the terminology of Maslova \cite{Ma1979}.

The following is now known about finite-one degrees inside many-one
degrees: A finite-one degree containing
the greatest finite-one degree inside its many-one degree is irreducible
and consists exactly of this one-one degree \cite{Ma1979}.
All other nonrecursive finite-one degrees contain infinite antichains of
bounded finite-one degrees. Those in turn might or might not contain
antichains of one-one degrees. There are infinitely many bounded finite-one
degrees such that the structure of one-one degrees inside them is
order-isomorphic to the natural numbers with their default order. The collection
of recursive sets consists of five finite-one degrees which all consist
of a single bounded finite-one degree and out of which three are a single
one-one degree and two are ascending chains of one-one degrees.

The interested reader finds, besides the information
provided in the textbooks of Odi\-fred\-di \cite{Od1989,Od1999}
and his survey article \cite{Od1981},
also further background on strong reducibilities and recursion theory
in general in other recursion-theoretic textbooks like those of
Calude~\cite{Ca2002}, Chong and Yu~\cite{CY2015},
Downey and Hirschfeldt~\cite{DH2010}, Li and Vit\'anyi~\cite{LV2008},
Nies~\cite{Ni2009}, Rogers~\cite{Ro1967} and Soare~\cite{So1987}.

\section{One-one degrees inside finite-one degrees}

\noindent
The three main reducibilities studied in the present work are many-one,
one-one and finite-one; these type of reducibilities had been used
by Post \cite{Po1944}, Myhill \cite{My1955} and Kjos-Hanssen and
Webb \cite{KW2021}, respectively. Reducibilities are a core concept
of recursion theory and Post \cite{Po1944} studied them in order to get
initial results towards his question whether there are recursively enumerable
Turing degrees which are neither recursive nor complete (= Turing equivalent
to the halting problem). The distinction between one-one, finite-one
and many-one functions was already studied for centuries in other branches
of mathematics, with one-one and many-one being the most common types.
Finite-one functions were explicitly used in the construction of the
Rudin-Blass ordering between ultrafilters in set theory, see
Laflamme and Zhu \cite{LZ1998} for an important paper on that notion;
this reducibility is not really a finite-one reducibility itself, but it uses
finite-one functions between the spaces on which the ultrafilters
are build as part of its definition.
Sets $A,B,\ldots$ used in this paper are subsets of the natural numbers
${\mathbb N} = \{0,1,2,\ldots\}$ and sets are identified with their
characteristic functions, so if $x \in A$ then $A(x)=1$ else $A(x)=0$.

\begin{defn} \label{def-reduction}
A function $f$ is a many-one reduction from $A$ to $B$ iff $f$ is recursive
and for all $x$, $A(x) = B(f(x))$. Furthermore, a many-one reduction $f$
is a finite-one reduction if for each $y$ there are at most finitely many $x$
with $f(x) = y$ and a one-one reduction if for each $y$
there is at most one $x$ with $f(x) = y$. Between these two reductions
is the bounded finite-one reduction where there is a constant $c$ such that
for every $y$ there are at most $c$ numbers $x$ with $f(x) = y$. Furthermore,
let $X \oplus Y = \{2z: z \in X\} \cup \{2z+1: z \in Y\}$
be the join of $X$ and $Y$. Now $X,Y$ are both one-one reducible
to their join. The many-one degree of $A$ is the set of all $B$
such that first $B$ is many-one reducible to $A$ and second $A$
is many-one reducible to $B$; finite-one, bounded finite-one and
one-one degrees are defined analogously.
\end{defn}

\begin{theorem} \label{thm-one}
Let $A$ be an infinite and coinfinite subset of the natural numbers.
Then one can construct an array $B_0,B_1,\ldots$ of sets finite-one
equivalent to $A$ such that each of them is one-one above $A$ and
either the $B_1,B_2,\ldots$ forms an antichain in the one-one degrees
or $A$ is in the greatest one-one degree in its many-one degree and
all $B_k$ are one-one equivalent to $A$.
\end{theorem}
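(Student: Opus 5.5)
The plan is to build each $B_k$ by replicating every number $x$ a finite, recursively chosen number of times, and then to diagonalise against one-one reductions by choosing those multiplicities. Fix recursive functions $g_0,g_1,\ldots$ (uniformly recursive, with $g_k(x)\ge 1$), to be determined by a construction. Let $C_k=\{\langle x,i\rangle : i<g_k(x)\}$, which is recursive and infinite, and transport
\[
B_k'=\{\langle x,i\rangle\in C_k : x\in A\}
\]
to $\mathbb N$ via a recursive bijection $\mathbb N\to C_k$; call the result $B_k$.

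The two required reductions are then immediate. The map $\langle x,i\rangle\mapsto x$ is finite-one, since the preimage of $x$ has size $g_k(x)$, and it reduces $B_k$ to $A$. The map $x\mapsto\langle x,0\rangle$ is a one-one reduction of $A$ to $B_k$. So every $B_k$ is finite-one equivalent to $A$ and one-one above $A$, whatever the $g_k$ are.

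It remains to make the $g_k$ produce an antichain, unless $A$ lies in the greatest one-one degree of its many-one degree. In the latter case $A$ is one-one equivalent to a cylinder. Every $B_k$ is many-one reducible to $A$, hence one-one reducible to $A$, and $A\le_1 B_k$ holds anyway, so all $B_k$ are one-one equivalent to $A$. The case split is therefore on whether $A\times\mathbb N\le_1 A$, and I will fix the construction uniformly so that it succeeds in the non-cylinder case.

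The requirements are $R_{e,j,k}$ for $j\neq k$ with $j,k\ge 1$: $\varphi_e$ is not a one-one reduction from $B_j$ to $B_k$. I handle them by a finite-injury priority construction that defines the $g_k(x)$ in increasing order of $x$. While $R_{e,j,k}$ is active, the default is to keep $g_k(x)=1$ for new $x$ and to choose a fresh witness $x$ with $g_j(x)=n$ for a growing $n$.

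The idea is the following. If $\varphi_e$ is total and one-one on the $n$ copies $\langle x,0\rangle,\ldots,\langle x,n-1\rangle$ in $B_j$, these copies must land on $n$ distinct points $\langle y,i\rangle$ of $C_k$ with $A(y)=A(x)$. Since new $y$'s receive only one copy in $C_k$, this forces $\varphi_e$ to spread the block of $x$ over many distinct $y$'s. We wait for these convergences, which is a $\Sigma_1$ event that may never happen; if it does not, $\varphi_e$ is partial and the requirement is met.

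The main obstacle is turning the persistent success of $\varphi_e$ into a contradiction with $A$ not being a cylinder. Suppose that, for all the witnesses $x$ and all $n$ tried, $\varphi_e$ converges injectively. Then I want to assemble an effective one-one map $\langle x,m\rangle\mapsto y$ witnessing $A\times\mathbb N\le_1 A$. Roughly, the $m$-th copy of $x$ is sent to the base point of the $m$-th new $y$ produced by $\varphi_e$, which is valid because $A(y)=A(x)$. Injectivity of this map across different $x$ must be guaranteed by the construction, by keeping different witnesses' blocks disjoint and by restricting attention to $y$'s that received only one copy in $C_k$.

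The difficulty is that the witness set is only a sparse recursive set of $x$'s rather than all of $\mathbb N$. I would handle this by choosing the witnesses to cover all $x$ beyond a point, since every $x$ can serve, with $n$ growing along $x$. The finitely many exceptional $x$ are patched by a finite modification, which is harmless for one-one reducibility when $A$ is infinite and coinfinite.

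Handling the interaction of infinitely many requirements, each wanting to control the $g$'s on cofinitely many $x$, is the delicate part. I would try to interleave the requirements on disjoint recursive infinite sets of $x$. Each requirement then only needs to show that success on its own infinite recursive set already yields $A\times\mathbb N\le_1 A$. For this I would use that the restriction of $A$ to an infinite recursive set $R$ satisfies $A\le_1 A\cap R$ up to the join with the pieces outside $R$, i.e. reduce via $A\equiv_1 (A\cap R)\oplus(A\cap\overline R)$. This is the step I expect to require the most care.
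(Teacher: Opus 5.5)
Your coding of $B_k$ by finite blocks is essentially the paper's: it uses recursive surjective finite-one maps $h_i$ with $B_i(x)=A(h_i(x))$. Your two easy reductions and the cylinder case are also fine. The gap is in extracting $A\times\mathbb N\le_1 A$ from the success of $\varphi_e$. The block of $x$ in $B_j$ has only $g_j(x)$ elements, so sending the $m$-th copy of $x$ to the $m$-th new $y$ defines your map only for $m<g_j(x)$. For each single $x$ you need infinitely many effectively found, pairwise distinct $y$ with $A(y)=A(x)$.

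The missing idea is to ask the pigeonhole principle for only one point. If the block of $x$ in $B_j$ is larger than the number of points of $B_k$ lying over $\{0,\ldots,x\}$, the one-one $\varphi_e$ must send some copy of $x$ over some $y>x$. This gives a recursive $f$ with $f(x)>x$ and $A(f(x))=A(x)$. If $f$ is defined on cofinitely many $x$, you patch the rest nonuniformly and iterate. The points $x,f(x),f(f(x)),\ldots$ are distinct with the same membership, and this turns every many-one reduction into $A$ into a one-one reduction. This is exactly the paper's $f_d$.

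This also shows why your scheduling fails. Disjoint recursive witness sets give $f$ only on some $R$. Since $\varphi_e$ decides where the block goes, $f(x)$ may leave $R$ and the iteration breaks. For instance, for a simple $Y$ the set $Y\oplus Y$ is again simple, hence not a cylinder, yet $2z\mapsto 2z+1$ is a strictly increasing self-reduction on the even numbers. Writing $A\equiv_1(A\cap R)\oplus(A\cap\overline R)$ does not repair this.

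So every requirement must act on cofinitely many points. With recursive multiplicities fixed once and for all, however, the demands of $R_{e,j,k}$ and $R_{e',k,j}$ at the same $x$ exclude each other. These demands are $g_j(x)>\sum_{y\le x}g_k(y)$ and $g_k(x)>\sum_{y\le x}g_j(y)$. Moreover, a requirement whose $\varphi_e$ is total but not a reduction, such as an identity-like map, can never stop acting, because the construction cannot consult $A$. If it gets its way cofinitely often, then $g_k<g_j$ almost everywhere, and $B_k\le_1 B_j$ via the obvious embedding after a finite patch.

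The paper avoids this by letting fibres grow at any stage. The marker of $R_{e,i,j}$ sits on one $y$ at a time. It keeps the current fibre $h_i^{-1}(y)$ larger than the current $h_j^{-1}(\{0,\ldots,y\})$ until $\varphi_e$ converges on the whole fibre. It then defines $f_d(y)$, setting $h_j(\varphi_e(x))>y$ if this value is still undefined, and moves on. The inequality is needed only at that moment, since $h_j$-values are permanent, so $R_{e',j,i}$ may later enlarge $h_j^{-1}(y)$. Fibres are therefore finite but not recursively bounded; in your terms, $C_k$ must be allowed to be merely r.e. Finally, higher-priority markers displace lower ones, and a marker waiting forever blocks only one point, which gives $f_d$ its cofinite domain.
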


\begin{proof}
One defines each set $B_i$ via a surjective finite-one reduction $h_i$ from
natural numbers to natural numbers where $B_i(x) = A(h_i(x))$. By construction,
every $B_i$ is finite-one reducible to $A$ and, by surjectivity,
$A$ is one-one reducible to every $B_i$. The functions $h_i$ are constructed
in stages and implicitly define $B_i$ as indicated above; the main
constraint is that each $y$ has at least one and at most finitely
many preimages $x$. The overall quantity and positions of these preimages
is controlled by a finite injury construction.
Furthermore, one let $\varphi_1,\varphi_2,\ldots$ be a numbering of
all partial-recursive functions which are one-one on their domain
and which satisfy that whenever $\varphi_e(x)$ is undefined
so is $\varphi_e(x+1)$. Note that for each one-one reduction between
two sets, one can find a $\varphi_e$ in this list which coincides with
this one-one reduction. Now one defines the following
requirements $R_{e,i,j}$:
\begin{enumerate}
\item $R_{0,0,0}$: This requirement makes sure that for each
  $y,i$ there is an $x$ such that $h_i(x) = y$ and that each
  $h_i(x)$ is eventually defined.
\item $R_{e,i,j}$ with $e>0$ and $i \neq j$: This requirement
  makes sure that whenever $\varphi_e$ is a one-one reduction
  from $B_i$ to $B_j$ then a finite variant of a strictly increasing
  selfreduction from $A$ to $A$ is constructed - thus such a selfreduction
  exists and $A$ is a cylinder, that is, the finite-one degree of $A$
  consists of a single one-one degree which is the largest one-one degree
  in the many-one degree of $A$.
\end{enumerate}
For this, one assumes a default enumeration of all requirements with
$R_{0,0,0}$ coming first and $R_{e,i,j}$ does not come strictly before
$R_{e',i',j'}$ when $e' \leq e \wedge i' \leq i \wedge j' \leq j$.
Let $g(R_{e,i,j})$ be the natural number assigned to the requirement
$R_{e,i,j}$ and assume that $g$ is a recursive bijection. In other word,
if $d = g(R_{e,i,j})$ then $R_{e,i,j}$ is the $d$-th requirement with
$R_{0,0,0}$ being the zeroth requirement.

\begin{algo} \label{algo-one}
The algorithm to construct the array runs in stages.
Each stage takes so long until
all the requirements in it have done the actions linked to it
or are skipped due to not requiring any current action.

At stage $s$, one first satisfies the requirement $R_{0,0,0}$:

While there are $i,y \leq s$ such that there is no $x$
on which $h_i(x)$ is already defined to be $y$ then take
the first $x$ where $h_i(x)$ is not yet defined and let
$h_i(x) = y$. Furthermore, while there are $x,i \leq s$ with
$h_i(x)$ being undefined, let $h_i(x) = y$ for the first
$y$ not yet in the range of $h_i$.

Now, one looks for $d=1,2,\ldots,s$ at that requirement
$R_{e,i,j}$ which satisfies $g(R_{e,i,j}) = d$:

If the marker $m_{e,i,j}$ is currently not sitting on some
number then do the following: Define the set $D = \{y: d \leq y \leq s$ and
$f_d(y)$ is not already defined and none of the markers for
requirements $R_{e',i',j'}$ with $0 < g(R_{e',i',j'}) < d$
is sitting on $y\}$; if $D$ is not empty then place $m_{e,i,j}$ on
the minimum of $D$ and remove all other markers sitting there
(they have lower priority).

If now the marker is sitting on a number $y$ and $f_d(y)$
is not yet defined then one does the following steps.

While the number of $x$ with $h_i(x) = y$ is less or equal to
the number of $x'$ with $h_j(x') \leq y$ do begin select the least
$x$ where $h_i(x)$ is not yet defined and let $h_i(x) = y$ end.

If $\varphi_e(x)$ is defined on all $x$ where currently $h_i(x)=y$
within $s$ steps then one finds an $x$ with $h_i(x) = y$ and either
$h_j(\varphi_e(x))$ is not yet defined or $h_j(\varphi_e(x)) > y$;
fix this $x$ for now.
If $h_j(\varphi_e(x))$ is not yet defined then one defines
$h_j(\varphi_e(x)) = y'$ for the first $y'>y$ which is not
yet in the range of $h_j$, so that from now on the second
subcase $h_j(\varphi_e(x)) > y$ holds.
Now one defines that $f_d(y) = h_j(\varphi_e(x))$.

Now one concludes the step by removing the marker $m_{e,i,j}$
from its position if $f_d(y)$ has been defined. The marker remains
on its position if the marker is waiting for future stages
until all $\varphi_e(x)$ with $h_i(x) = y$ become defined.

End of activity for $R_{e,i,j}$ inside stage $s$.

Once all requirements $R_{e,i,j}$ with $g(R_{e,i,j}) \leq d$ are handled,
this is the end of stage $s$ and the algorithm goes to stage $s+1$.
\end{algo}

\noindent
Recall that $B_i(x) = A(h_i(x))$ for all $i,x$. Thus
every $B_i$ is many-one equivalent to $A$ via $h_i$.
Theorem~\ref{thm-one} will now be proven by a series of
four claims showing important properties of the array
of sets constructed with Algorithm~\ref{algo-one};
note that $A$ is a parameter, but it is only evaluated
at one point: To make a finite modification of some function
$f_d$ to show that $A$ is a cylinder in the case that the
$B_i$ do not form an antichain of one-one degrees inside
the finite-one degree of $A$.

\begin{claim}
At every stage, all $h_i$ have a finite domain and overall only
finitely many new values of the functions $f_i$ are defined in
a stage.
\end{claim}

\noindent
To see this claim,
assume by way of contradiction that this would be false and let
$s$ be the first stage where for infinitely many pairs a value
$h_i(x)$ is newly defined. Furthermore, there  must be a first
requirement with respect to the requirement number $d = g(R_{e,i,j})$
for which this happens. It cannot be that $d=0$ as that requirement
defines at most for each $(i,y)$ with $i,y \leq s$ one value
$h_i(x) = y$ and furthermore for each $(i,x)$ with $i,x \leq s$
at most one further value.
This are at most $2 \cdot (s+1)^2$ many definitions and thus only
finitely many. For $d>0$, there are only
two definition steps inside the activity. The substep
\begin{quote}
``While the number of $x$ with $h_i(x) = y$ is less or equal to
the number of $x'$ with $h_j(x') \leq y$ do begin select the least
$x$ where $h_i(x)$ is not yet defined and let $h_i(x) = y$ end.''
\end{quote}
defines only finitely many values $h_i(x)$ as by assumption there
are only finitely many value $h_j(x)$ defined before. Furthermore,
there is in this step at most one further point where a value is
defined and that is defining $h_j(x)=y'$ for some value $y'>s$.
Thus in contrary to the assumption, only finitely many new
definitions are done in this step. Therefore also at most
finitely many new definitions are done in stage $s$.

\begin{claim}
For all $i,y$ there are at least one and at most finitely
many $x$ with $h_i(x) = y$. In particular each $B_i$ is finite-one
equivalent to $A$ and $A$ is one-one reducible to each $B_i$.
\end{claim}

\noindent
Note that $R_{0,0,0}$ enforces that for each pair $(i,y)$ there
is at least one $x$ with $h_i(x) = y$ and that therefore all
functions $h_i$ are surjective. Thus one has only to show
that the overall number is finite. By the first verification item,
this can only happen if some marker sits forever on a value $y$
for a requirement $R_{e,i,j}$. So let $y$ be the least number
so that there are infinitely many $x$ with $h_i(x) = y$ for one
single $i$. Furthermore, this can only be caused by finitely
many requirements, as requirements $R_{e,i,j}$ with $g(R_{e,i,j})>y$
do not define $h_i$ on any $x$ to have the value $y$. In addition
all the requirements together who act at markers strictly below
$y$ can only define $h_i(x)$ for the first $x$ which takes the
value $y$ but not for further ones, thus these markers cannot
cause the problem. So it must be one requirement $R_{e,i,j}$
whose marker settles on $y$ forever. Let $s$ be so large that
all requirements which define some value $h_j(x')=y'$ for
some $y'<y$ have already done so, by choice of $y$ there are
only finitely many; furthermore, $s$ is so large such that there
is at least one $x$ with $h_j(x) = 0$ already being defined.
Furthermore the marker
$m_{e,i,j}$ sits from stage $s$ onwards forever on $y$, and the
markers of the higher priority requirements are either sitting
on values strictly above $y$ forever or they have converged
to a lower value and will not be moved again. Thus, no other marker
makes new definitions of the form $h_j(x') = y'$ for some $y' \leq y$
from now on. Let $t$ be the number of $x$ with $h_j(x) \leq y$,
this number is thus constant. It follows by the way that the
activity related to requirement $R_{e,i,j}$ is defined that
there are at most $t+1$ many values $x$ for which the
requirement defines $h_i(x) = y$, thus there are only finitely
many and not infinitely many. So the assumption was false
and it follows from contraposition that each $B_i$ is indeed
finite-one equivalent to $A$ via the mapping $h_i$.

\begin{claim} \label{clm:six}
Assume that $\varphi_e$ one-one reduces $B_i$ to $B_j$ and $d=g(R_{e,i,j})$.
Now $f_d$ is defined on almost all inputs and for all $y$ in its domain,
$f_d$ satisfies $A(y) = A(f_d(y))$ and $f_d(y)>y$.
\end{claim}

\noindent
To see this claim, consider any $y$ in the domain of $f_d$.
Then $f_d(y) = h_j(\varphi_e(x))$ for some $x$ with $h_i(x) = y$
and $h_j(\varphi_e(x)) > y$. Thus $f_d(y)>y$. Furthermore,
$B_i(x) = A(h_i(x))$ by definition and $B_j(\varphi_e(x)) = A(y)$
by the assumed correctness of $\varphi_e$ and the equality-chain
$A(f_d(y)) = A(h_j(\varphi_e(x))) = B_j(\varphi_e(x)) = B_i(x) = A(y)$
shows that $A$ is a partial selfreduction on $A$.

Now assume that $y \notin dom(f_d)$ and $y \geq d$. There are only
finitely many values on which a marker of a higher priority requirement
stays forever, so assume that $y$ is not one of them. One possibility
is now that the marker $m_{e,i,j}$ itself stays forever on $y$.
Then there is some stage $s$ large enough such that all
$x$ with $h_i(x) \leq y$ or $h_j(x) \leq y$ are already defined
and $\varphi_e(x)$ is defined for all $x$ with $h_i(x) = y$
and $m_{e,i,j}$ is sitting on $y$. Then it must be that there
are more $x$ with $h_i(x) = y$ than $x$ with $h_j(x) \leq y$,
otherwise $h_i(x) = y$ would become defined for some further $x$ in
the future; as this does not happen, this must already have been
defined before. Now there must be an $x$ such that $h_i(x) = y$
and $h_j(\varphi_e(x))$ is either undefined or strictly above $y$;
however, this would force in this stage the value of $f_d(y)$
to become defined. As by assumption this does not happen,
it cannot be that the marker $m_{e,i,j}$ sits on some $y$
forever. Furthermore, it cannot be that the $y$ is overlooked,
that is, from some time point onwards, for all further stages,
none of the markers associated to a requirement $R_{e',i',j'}$
with $g(R_{e',i',j'}) \leq d$ sits on $y$. Without loss of generality,
$y$ is the least one of the numbers greater equal to $d$ for which
$f_d(y)$ is undefined and to which none of the markers $m_{e',i',j'}$
with $g(R_{e',i',j'}) \leq d$ converges. Then $y$ would for almost
all stages qualify as the value on which $m_{e,i,j}$ will take
and thus, in contrary to the assumption, $f_d(y)$ gets defined.
This completes the proof that the domain of $f_d$ is cofinite.

\begin{claim}
If $\varphi_e$ is a one-one reduction from $B_i$ to $B_j$
then $A$ is a cylinder, that is, in the greatest one-one degree
of its many-one degree. Furthermore, if $A$ is infinite, coinfinite
and not a cylinder, then $B_0,B_1,\ldots$ is an antichain
inside the finite-one degree of $A$.
\end{claim}

\noindent
By the above, $f_d$ is defined for almost all $y$. One makes it
total by mapping all remaining $y$ to the first $y'>y$ with
$A(y') = A(y)$. Now let $f_{d,0}(x) = x$ and
$f_{d,k+1}(x) = f_d(f_{d,k}(x))$ for all $k$. Using
the functions $f_{d,k}$ one can, given a
many-one reduction $f'$ from some set $C$ to $A$, one can
also obtain a one-one reduction $f''$ by defining
for $x=0,1,\ldots$ the value $f''(x) = f_{d,k}(f'(x))$ for the
first $k$ where $f_{d,k}(f'(x)) \notin \{f''(x'): x' < x\}$.
This proves that $C$ is one-one reducible to $A$ and thus $A$
is in the greatest one-one degree of its many-one degree.

If now $A$ is infinite and coinfinite and not a cylinder,
then no $f_d$ can be finitely extended to a total strictly
increasing selfreduction of $A$, thus either $f_d$ is not
a partial selfreduction or its domain is coinfinite. This
happens only if the corresponding $\varphi_e$ is not a
one-one reduction from $B_i$ to $B_j$. Thus for the array
of the $B_i$ constructed, there exist no distinct indices $i,j$
and no one-one reduction $\varphi_e$ such that $\varphi_e$
reduces $B_i$ to $B_j$. So $B_0,B_1,\ldots$ is an
antichain in the one-one degrees, the property that all
$B_i$ are strictly one-one above $A$ but still finite-one
equivalent to $A$ follows from the construction.
\end{proof}

\noindent
Theorem~\ref{thm-one} can be improved to the following Theorem~\ref{thm-two}.
For this, let $\sqsubset$ be a recursive partial order, that is, a relation
which is transitive and which satisfies for all distinct $i,j$ that either
(a) $i \sqsubset j$ or (b) $j \sqsubset i$ or (c) $i,j$ are incomparable;
note that the case $i \sqsubset j \wedge j \sqsubset i$ does not
occur. An algorithm can compute for each pair of distinct $i,j$
which of the above three cases (a), (b) and (c) applies.

\begin{theorem} \label{thm-two}
Let $\sqsubset$ be any recursive partial order.
Let $A$ be an infinite and coinfinite set which is not in the greatest
one-one degree of its many-one degree. Then one can construct $B_0,B_1,
\ldots$ finite-one equivalent to $A$ such that $B_i$ is one-one reducible
to $B_j$ if and only if either $i=j$ or $i \sqsubset j$; that is,
one can embed every recursive partial order into the one-one degrees
inside the finite-one degree of $A$.
\end{theorem}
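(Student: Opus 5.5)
We modify the construction of Theorem~\ref{thm-one} so that the positive reductions demanded by $\sqsubset$ are built in, and the negative requirements are kept only for pairs with $i \not\sqsubset j$. The representation stays the same: $B_i(x) = A(h_i(x))$ with $h_i$ surjective and finite-one, so every $B_i$ is finite-one equivalent to $A$.

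\textbf{Positive side.} To make $i \sqsubset j$ imply $B_i \le_1 B_j$, we do not build $B_i$ independently. Instead we code all the sets into a common tower. Fix recursive one-one functions $p_{i,j}$ for $i \sqsubset j$, with pairwise disjoint ranges for different $i$ below a fixed $j$; for instance $p_{i,j}(x) = \langle i,x\rangle$ placed on a reserved column of the domain of $h_j$. We then require $h_j(p_{i,j}(x)) = h_i(x)$ for all $x$. Since $p_{i,j}$ is then a one-one reduction from $B_i$ to $B_j$, we get $B_i \le_1 B_j$.

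For these constraints to be consistent along chains $i \sqsubset k \sqsubset j$, we define $h_j$ on column $\langle i,\cdot\rangle$ as a copy of $h_i$ restricted to $h_i$'s own free column only. Equivalently, $B_j$ is the disjoint union over $i \sqsubseteq j$ of free parts $C_i$, where $C_i(x) = A(g_i(x))$ and each $g_i$ is built as in Algorithm~\ref{algo-one}. Then $B_i \le_1 B_j$ holds whenever $\{k : k \sqsubseteq i\} \subseteq \{k : k \sqsubseteq j\}$, which is the case exactly when $i \sqsubseteq j$, by transitivity. Each $B_j$ is a finite join if $j$ has only finitely many predecessors. In general it is a recursive infinite join, and then we must keep $h_j$ finite-one: each value $y$ must receive only finitely many preimages overall. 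We ensure this by letting $g_i$ hit a value $y$ only when $i \le y$. The value $y$ still gets a preimage, because $g_0$ (or the free part of $j$ itself) is surjective. Surjectivity of the free part $g_j$ then gives $A \le_1 B_j$, and $B_j \le_{f} A$ follows from finite-one-ness.

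\textbf{Negative side.} For $i \neq j$ with $i \not\sqsubset j$, there is a component $C_i$ appearing in $B_i$ but not in $B_j$, since $i$ itself does not satisfy $i \sqsubseteq j$. For each $e$ we run the requirement $R_{e,i,j}$ exactly as in Algorithm~\ref{algo-one}, but with $h_i$ replaced by the free column $g_i$ of $B_i$. The requirement waits until $\varphi_e$ is defined on all $x$ with $g_i(x) = y$, having first padded that set beyond the number of $x'$ in $B_j$ with $h_j(x') \le y$. Because $\varphi_e$ is one-one, some such $x$ is then mapped to a point of $B_j$ whose value is either undefined or greater than $y$; if it is undefined we define it to be greater than $y$. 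Whenever $\varphi_e$ is a one-one reduction, this produces a partial increasing self-reduction $f_d$ of $A$ with cofinite domain. The counting argument and Claim~\ref{clm:six} carry over, with $h_j$ in place of the old $h_j$.

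\textbf{Main obstacle.} The delicate point is that defining $h_j(\varphi_e(x)) = y'$ may land on a column of $B_j$ that copies some $C_k$. There we are not free to choose the value: it is forced to equal $g_k(\cdot)$, and because the construction is simultaneous it may already be defined, possibly as a value at most $y$. We handle this by noting that every coded point in $B_j$ with value at most $y$ is, by finite-one-ness, among only finitely many points. The padding step counts all $x'$ in the whole of $B_j$, copies included, with $h_j(x') \le y$. Since $\varphi_e$ is one-one, some $x$ with $g_i(x) = y$ must go to a point with value greater than $y$, or to a point not yet defined; such an undefined point may lie in a copy of $C_k$. In that case we define $g_k$ at the corresponding argument to be a fresh value greater than $y$. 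This is legitimate because $k \ne i$ and fresh values keep $g_k$ finite-one; every copy of that point updates consistently, since copies are literal. Finite injury and the finiteness claims then go through as in Theorem~\ref{thm-one}.

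Finally, since $A$ is not a cylinder, the last claim of Theorem~\ref{thm-one} shows that no $\varphi_e$ reduces $B_i$ to $B_j$ when $i \not\sqsubset j$. Together with the positive side, this gives $B_i \le_1 B_j$ if and only if $i = j$ or $i \sqsubset j$.
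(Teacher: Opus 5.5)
Your proof is correct in substance. It reaches the positive half of the embedding by a genuinely different route from the paper; the negative half is the paper's.

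\textbf{How the paper does it.} The paper keeps a single function $h_i$ per set. For $j\sqsubset i$ it adds positive requirements $R_{0,i,j}$, which pad so that for almost all $y$ the map $h_i$ has more preimages of $y$ than $h_j$ has of $\{0,\ldots,y\}$. It then gets $B_j\le_1 B_i$ by matching fibres and repairing finitely many places with a nonuniform table. Its finite-one claim needs an analysis of how positive padding propagates along $\sqsubset$-chains while a negative marker sits on $y$.

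\textbf{How you do it.} You make $B_j$ the join of independently built free parts $C_k$ over the down-set $\{k: k\sqsubseteq j\}$. Then $i\sqsubseteq j$ yields a uniform column-inclusion reduction with no counting, and the only padding left is done by negative requirements. Padding $g_i$ on behalf of $R_{e,i,j}$ never enters $B_j$, since $i\not\sqsubseteq j$. So in the finiteness argument of Theorem~\ref{thm-one}, the count $|h_j^{-1}(\{0,\ldots,y\})|$ stabilises once the last marker on $y$ is fixed, and the argument goes through essentially unchanged.

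\textbf{Trade-offs.} Your route gives explicit, uniform positive reductions and a simpler verification. The price is twofold:
\begin{itemize}
\item infinite down-sets force the restriction that $g_k$ only takes values $\geq k$;
\item negative requirements may have to define values inside copied columns. As you note, this is harmless because $k\neq i$ and the values are fresh.
\end{itemize}
The paper's route avoids infinite joins and keeps every $B_i$ a surjective finite-one pullback of $A$ by a single function.

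\textbf{A slip to repair.} Under your own restriction, $g_j$ misses the values $y<j$, and $g_0$ need not be a column of $B_j$. So the sentences deriving preimages for all $y$ and $A\le_1 B_j$ from surjectivity are wrong as written. The statement only asks for finite-one equivalence, and $A\le_{fo} B_j$ already follows from $g_j$ being onto $\{y: y\geq j\}$ together with a finite table. Alternatively, add a common identity column (a copy of $A$) to every $B_j$: this restores $A\le_1 B_j$ and disturbs neither the column inclusions nor the negative requirements.

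Two small adjustments are also needed so the restriction is respected:
\begin{itemize}
\item let the markers of $R_{e,i,j}$ sit only on $y\geq i$;
\item choose fresh values in column $k$ above $\max(y,k)$.
\end{itemize}
Finally, the first paragraph's idea of copying all of $h_i$ into $B_j$ should be dropped in favour of the down-set join you actually use.
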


\begin{proof}
The proof of Theorem~\ref{thm-two} is similar to that of
Theorem~\ref{thm-one}. Thus it is mainly listed out what changes
are to be done to prove the result. Let $\sqsubseteq$ be a recursive
preorder, that is, it is transitive and reflexive.
Again one defines each set $B_i$ via a surjective finite-one reduction $h_i$
from natural numbers to natural numbers where $B_i(x) = A(h_i(x))$. By
construction, every $B_i$ is finite-one reducible to $A$ and, by surjectivity,
$A$ is one-one reducible to every $B_i$. The functions $h_i$ are constructed
in stages and implicitly define $B_i$ as indicated above; the main
constraint is that each $y$ has at least one and at most finitely
many preimages $x$. The overall quantity and positions of these preimages
is controlled by a finite injury construction.
Furthermore, one let $\varphi_1,\varphi_2,\ldots$ be an acceptable numbering
of all partial-recursive one-one functions with $\varphi_e(x+1)$ only being
defined when $\varphi_e(x)$ is and one considers the following
requirements $R_{e,i,j}$:
\begin{enumerate}
\item $R_{0,0,0}$: This requirement makes sure that for each
  $y,i$ there is an $x$ such that $h_i(x) = y$ and that each
  $h_i(x)$ is eventually defined.
\item $R_{0,i,j}$ with $i \neq j$ and $j \sqsubset i$: This
  requirement makes sure that, for almost all $y$, there are at
  strictly more $x$ with $h_i(x)=y$ as there are $x'$ with
  $h_j(x') \leq y$. Thus, for all but finitely many $y$, one
  can map the $x'$ with $h_j(x') = y$ in a one-one way to the
  $x$ with $h_i(x) = y$ and one can use $A'$ to map the remaining
  finitely many $x'$ to counterparts $x$ with $B_i(x) = B_j(x')$.
\item $R_{e,i,j}$ with $e>0$ and $i \neq j$ and $i \not\sqsubset j$:
  This requirement makes sure that whenever $\varphi_e$ is a one-one reduction
  from $B_i$ to $B_j$ then a finite variant of a strictly increasing
  selfreduction from $A$ to $A$ is constructed---thus, such a selfreduction
  exists and $A$ is a cylinder, that is, the finite-one degree of $A$
  consists of a single one-one degree which is the largest one-one degree
  in the many-one degree of $A$.
\end{enumerate}
For this, one assumes a default enumeration of all requirements with
$R_{0,0,0}$ coming first and $R_{e,i,j}$ does not come strictly before
$R_{e',i',j'}$ when $e' \leq e \wedge i' \leq i \wedge j' \leq j$.
Let $g(R_{e,i,j})$ be the natural number assigned to the requirement
$R_{e,i,j}$ and assume that $g$ is a recursive bijection.

\begin{algo} \label{algo-two}
The algorithm runs in stages. Each stage takes so long until
all the requirements in it have done the actions linked to it.

At stage $s$, one first satisfies the requirement $R_{0,0,0}$:

While there are $i,y \leq s$ such that there is no $x$
on which $h_i(x)$ is already defined to be $y$ then take
the first $x$ where $h_i(x)$ is not yet defined and let
$h_i(x) = y$. Furthermore, while there are $x,i \leq s$ with
$h_i(x)$ being undefined, let $h_i(x) = y$ for the first
$y$ not yet in the range of $h_i$.

Now, one looks for $d=1,2,\ldots,s$ at the requirement
$R_{e,i,j}$ with $g(R_{e,i,j}) = d$:

If the marker $m_{e,i,j}$ is currently not sitting on some
number then do the following: Define the set $D = \{y: d \leq y \leq s$
and (either $f_d(y)$ is not already defined or $e=0$ and there are
at least as many $x'$ with $h_j(x') \leq y$ as $x$ with $h_i(x) = y$)
and none of the markers for
requirements $R_{e',i',j'}$ with $0 < g(R_{e',i',j'}) < d$
is sitting on $y\}$; if $D$ is not empty then place $m_{e,i,j}$ on
the minimum of $D$ and, if $e>0$, then remove all other markers sitting
there (they have lower priority).

If now the marker $m_{e,i,j}$ is sitting on a number $y$ then
one does the following steps.

While the number of $x$ with $h_i(x) = y$ is less or equal to
the number of $x'$ with $h_j(x') \leq y$ do begin select the least
$x$ where $h_i(x)$ is not yet defined and let $h_i(x) = y$ end.

If $e>0$ and $\varphi_e(x)$ is defined on all $x$ where currently $h_i(x)=y$
within $s$ steps then one finds an $x$ with $h_i(x) = y$ and either
$h_j(\varphi_e(x))$ is not yet defined or $h_j(\varphi_e(x)) > y$;
fix this $x$ for now.

If $h_j(\varphi_e(x))$ is not yet defined then one defines
$h_j(\varphi_e(x)) = y'$ for the first $y'>y$ which is not
yet in the range of $h_j$, so that from now on the second
subcase $h_j(\varphi_e(x)) > y$ holds.
Now one defines that $f_d(y) = h_j(\varphi_e(x))$.

Now one concludes the step by removing the marker $m_{e,i,j}$
from its position if either $e=0$ or $f_d(y)$ has been defined.
It remains on its position if (a) the marker is waiting
for future stages until all $\varphi_e(x)$
with $h_i(x) = y$ become defined and (b) no higher priority
marker with $e>0$ goes onto $y$.

End of activity for $R_{e,i,j}$ inside stage $s$.

Once all requirements $R_{e,i,j}$ with $g(R_{e,i,j}) \leq d$ are handled,
this is the end of stage $s$ and the algorithm goes to stage $s+1$.
\end{algo}

\noindent
Recall that $B_i(x) = A(h_i(x))$ for all $i,x$. Thus
every $B_i$ is many-one equivalent to $A$ via $h_i$.
Again, one will establish the properties of the array of
sets constructed by above algorithm~\ref{algo-two} through
a series of four claims.

\begin{claim} At every stage, all $h_i$ have a finite domain and
overall only finitely many new values of the functions $f_i$
are defined in a stage.
\end{claim}

\noindent
Assume by way of contradiction that this claim would be false and let
$s$ be the first stage where for infinitely many pairs a value
$h_i(x)$ is newly defined. Furthermore, there  must be a first
requirement with respect to the requirement number $d = g(R_{e,i,j})$
for which this happens. It cannot be that $d=0$ as that requirement
defines at most for each $(i,y)$ with $i,y \leq s$ one value
$h_i(x) = y$ and furthermore for each $(i,x)$ with $i,x \leq s$
at most one further value.
This are at most $2 \cdot (s+1)^2$ many definitions and thus only
finitely many. For $d>0$, there are only
two definition steps inside the activity. The substep
\begin{quote}
  ``While the number of $x$ with $h_i(x) = y$ is less or equal to
  the number of $x'$ with $h_j(x') \leq y$ do begin select the least
  $x$ where $h_i(x)$ is not yet defined and let $h_i(x) = y$ end.''
\end{quote}
defines only finitely many values $h_i(x)$ as by assumption there
are only finitely many value $h_j(x)$ defined before. Furthermore,
there is in this step at most one further point where a value is
defined and that is defining $h_j(x)=y'$ for some value $y'>y$.
Thus in contrary to the assumption, only finitely many new
definitions are done in this step. Therefore also at most
finitely many new definitions are done in stage $s$.

\begin{claim} \label{cl:claimtwo}
For all $i,y$ there are at least one and at most finitely
many $x$ with $h_i(x) = y$. In particular each $B_i$ is finite-one
equivalent to $A$ and $A$ is one-one reducible to each $B_i$.
\end{claim}

\noindent
Note that $R_{0,0,0}$ enforces that for each pair $(i,y)$ there
is at least one $x$ with $h_i(x) = y$. Thus one has only to show
that the overall number is finite.

So assume that there are pairs $(i,y)$ with infinitely many $x$
satisfying $h_i(x) = y$. Among those pairs, take $y$ as small as possible
and fix it from now on.

There are at most $y$ indices $i$ for which there are at least
two $x$ with $h_i(x) = y$. The reason is that for each
such $i$ there must be a requirement $R_{e,i,j}$ where $e=0$ is
possible with $j \neq i$ and $1 \leq g(R_{e,i,j}) \leq y$. Note that
except for the first $x$ with $h_i(x) = y$, all further ones are
defined by some requirement $R_{e,i,j}$ with $g(R_{e,i,j}) \leq y$
and $i$ must be the second parameter, not the third of the requirement.

Now let $E = \{i,j:$ there exists requirement $R_{e,i,j}$ with $g(R_{e,i,j})
\leq y$ and $j \neq i\}$, let $E' = \{i \in E:$ there are infinitely
many $x$ with $h_i(x) = y\}$ and let $E'' = \{i \in E'$: no $j \sqsubset i$
is in $E'\}$. Note that $E,E',E''$ are finite sets and as $\sqsubset$ is a
partial order, $E' \neq \emptyset$ implies $E'' \neq \emptyset$ and
that only the $i \in E$ satisfy that there are for some $y' \leq y$ at
least two $x'$ with $h_i(x') = y'$.
For each $i \in E''$ there must be be a marker $R_{e,i,j}$ with
$g(R_{e,i,j}) \leq y$ sitting on $y$ infinitely often in order to
achieve that $i \in E'$, that is, that there are infinitely many
$x$ with $h_i(x) = y$. Therefore one has to look at the stages
and marker movement in more detail.

So let $s$ be so large that the following holds:
\begin{enumerate}
\item $s \geq y$.
\item If $i \in E$ and $f_d(y)$ gets eventually defined then this
   happened before stage $s$.
\item All requirements which define only finitely many $h_i(x)$
   with $h_i(x) \leq y$ and $i \in E$ have done this before stage $s$.
\item All markers which go only finitely often onto a number $y' \leq y$
   have completed these actions before stage $s$.
\item All requirements $R_{e,i,j}$ with $g(R_{e,i,j}) \leq y$ which
   have a marker only finitely often sitting on some $y' \leq y$
   have removed this marker forever from the corresponding $y'$
   before stage~$s$.
\end{enumerate}
Now let $F = \{$Requirement $R_{e',i',j'}$: $m_{e',i',j'}$ is on $y$
for infinitely many stages and $e'>0\}$ and let $R_{e,i,j}$ be the
member of $F$ where $d = g(R_{e,i,j})$ is minimal. Then $f_d(e)$
must remain undefined forever and therefore the marker will not
be released; the only higher priority markers which may get attention
are those where $e=0$ and those do not remove $m_{e,i,j}$ from its
current position; note that by conditions 2 and 3 above in the choice
of $s$, markers which get attention only finitely often do this before
stage $s$ and will not be requesting it again at stage $s$ or later.
Therefore the only way to assign a new $h_{i'}(x') = y$
for an $i' \in E$ is when either $i'=i$ and the number of $x''$
with $h_j(x'') \leq y$ has increased before or when there is a
$i'' \sqsubset i'$ where $i'' \in E$
and the number of $x''$ with $h_{i''}(x'') \leq y$
has increased after stage $s$.
However, this requires that $i=i'$ or $i \sqsubset i'$
by the fact that $m_{e,i,j}$ is sitting on $y$ and does not make
space for other markers. Furthermore, $j \neq i$ and $j \not\sqsubset i$,
thus there are no new $x'$ with $h_{i'}(x) = y$ and
$i' = j \vee i' \sqsubset j \vee i' \sqsubset i$. Therefore
no new $x$ with $h_i(x) = i$ are added and,
in contrary to the assumption, $i \notin E'$. Thus $E' = \emptyset$,
that is, there is no $i$ with $h_i(x) = y$ for infinitely many $x$.
It follows that the stament of Claim~\ref{cl:claimtwo} is correct.

\begin{claim}
Assume that $e>0$ and $i \neq j$ and $\varphi_e$ is total and
one-one reduces $B_i$ to $B_j$ and $i \not\sqsubset j$.
Now the requirement $R_{e,i,j}$ exists and has some value $d$ and
$f_d$ is defined on almost all inputs and for all $y$ in its domain,
$f_d$ satisfies $A(y) = A(f_d(y))$ and $f_d(y)>y$.
\end{claim}

\noindent
This claim has the same proof as Claim~\ref{clm:six}.

\begin{claim}
If the two properties $\varphi_e$ is a one-one reduction from $B_i$ to $B_j$
and $i \not\sqsubseteq j$ jointly hold then $A$ is a cylinder,
that is, in the greatest one-one degree of its many-one degree.
Furthermore, if $i \sqsubseteq j$ then $B_i$ is one-one
reducible to $B_j$, independent on what set $A$ is, only provided
that $A$ is infinite and coinfinite. Thus,
if $A$ is neither recursive nor a cylinder, then $B_0,B_1,\ldots$ represent
an array of one-one degrees inside the finite-one degree of $A$, whose
ordering by one-one reducibility coincides with the partial order
$\sqsubseteq$ when made reflexive by using $\sqsubseteq$ instead of
$\sqsubset$ itself.
\end{claim}

\noindent
By the above, if $i \not\sqsubseteq j$ and $\varphi_e$ is total
then $f_d$ is defined for almost all $y$. One makes it
total by mapping all remaining $y$ to the first $y'>y$ with
$A(y') = A(y)$. Now let $f_{d,0}(x) = x$ and
$f_{d,k+1}(x) = f_d(f_{d,k}(x))$ for all $k$. Using
the functions $f_{d,k}$ one can, given a
many-one reduction $f'$ from some set $C$ to $A$, one can
also obtain a one-one reduction $f''$ by defining
for $x=0,1,\ldots$ the value $f''(x) = f_{d,k}(f'(x))$ for the
first $k$ where $f_{d,k}(f'(x)) \notin \{f''(x'): x' < x\}$.
This proves that $C$ is one-one reducible to $A$ and thus $A$
is in the greatest one-one degree of its many-one degree.

If now $A$ is infinite and coinfinite and not a cylinder
and $d = g(R_{e,i,j})$ for a requirement with $e>0$,
then $f_d$ cannot be finitely extended to a total strictly
increasing selfreduction of $A$, thus either $f_d$ is not
a partial selfreduction or its domain is coinfinite. This
happens only if the corresponding $\varphi_e$ is not a
one-one reduction from $B_i$ to $B_j$. As $e$ was chosen
arbitrarily, $B_i$ is not one-one reducible to $B_j$.

Furthermore, if $j \sqsubset i$, then $j \neq i$ and for all $d$ and
all $y \geq d$ with $d = g(R_{0,i,j})$, it holds that either
some higher priority requirement with a number
strictly below $g(0,i,j)$ has a marker eventually sitting forever
on $y$ or that there are strictly more $x$ with $h_i(x)=y$
than $x'$ with $h_j(x') = y$. Thus one can map the $x$ with $h_i(x)=y$
for almost all $y$ in a one-one way the $x'$ with $h_j(x') = y$
to $x$ with $h_i(x) = y$,
let $f$ be the so far constructed mapping. Now the
remaining finitely many undefined places of $f$ can be patched,
as for almost all $y$ there is an $x$ with $h_i(x) = y$ not in the
range of the $f$ constructed so far and while there are only finitely many
$x'$ not yet mapped to an $x$; thus $B_j$ is one-one reducible to $B_i$.
The just mentioned patching can be done using the oracle $A'$ and as
that oracle is used only finitely often, a nonuniformly obtained finite
table can replace it. If $j = i$ then the identity one-one reduces
$B_j$ to $B_i$, thus all cases of $j \sqsubseteq i$ are covered.
\end{proof}

\noindent
Mostowski \cite{Mo1938} has proven that there is a universal recursive
partial order, that is, a recursive partial order $\sqsubset$ such that
every further countable partial order can, though not effectively, be embedded
into it. Thus one has the below corollary, where the first part
follows directly from Theorem~\ref{thm-two} and the second part is
a direct consequence of the fact that when $A$ has a nonirreducible
finite-one degrees then $A$ has also a nonirreducible many-one degree.
Gu~\cite[Lemma 3.4]{Gu2023} provides an outline for this, quite
short, construction. Sacks \cite{Sa1963b} proved the related result
that every countable partial order can be embedded into the structure
of all Turing degrees with Turing reducibility as partial order.

\begin{corollary} \label{cor-for-thm-two}
Let $A$ be a nonrecursive set which is not isomorphic to a cylinder.
Now every at most countable partial order $(P,\sqsubset)$
can be embedded, in a noneffective way, into the following structures:
\begin{description}
\item[(a)] the partially ordered set of one-one degrees inside the
  finite-one degree of $A$;
\item[(b)] the partially ordered set of one-one degrees inside the
  many-one degree of $A$.
\end{description}
Furthermore, every nonrecursive nonirreducible many-one (finite-one)
degree has a representative $A$ which is neither recursive nor a
cylinder, thus every countable partial order can be embedded into
the structure of one-one degrees inside such a degree.
\end{corollary}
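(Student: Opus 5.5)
The plan is to derive the corollary almost entirely from Theorem~\ref{thm-two}, together with Mostowski's universal recursive partial order and the standard facts about cylinders.

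\textbf{Part (a).} Fix a universal recursive partial order $\sqsubset$ in the sense of Mostowski, and let $(P,\sqsubset_P)$ be a given countable partial order. By universality there is a (noneffective) order embedding $\pi$ of $P$ into the natural numbers ordered by $\sqsubset$.

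Since $A$ is nonrecursive, it is infinite and coinfinite. Since $A$ is not a cylinder, it is not in the greatest one-one degree of its many-one degree: by Rogers, that degree consists exactly of the cylinders. So Theorem~\ref{thm-two} applies and yields sets $B_0,B_1,\ldots$, all finite-one equivalent to $A$, such that $B_i \leq_1 B_j$ holds iff $i=j$ or $i \sqsubset j$. The map $p \mapsto \deg_1(B_{\pi(p)})$ is then an order embedding of $P$ into the one-one degrees inside the finite-one degree of $A$. It is injective because distinct indices give one-one incomparable or strictly comparable sets, and the antisymmetry of $\sqsubset$ rules out mutual reducibility.

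\textbf{Part (b).} Every set finite-one equivalent to $A$ is many-one equivalent to $A$. Also, one-one reducibility between the $B_i$ does not depend on which ambient degree we view them in. Hence the same embedding works for (b).

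\textbf{The "furthermore" clause.} Let the many-one (respectively finite-one) degree be nonrecursive and nonirreducible. Pick any representative $C$; it is nonrecursive. Its degree is nonirreducible, meaning it contains more than one one-one degree. By Rogers, the cylinders in a many-one degree form a single one-one degree, the greatest one. Hence the degree contains a set $A$ that is not a cylinder.

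In the finite-one case, note that $C \times \mathbb N$ is always a cylinder finite-one... wait, no: $C\times\mathbb N$ is only many-one equivalent to $C$, not necessarily finite-one equivalent. So in the finite-one case argue directly instead. A finite-one degree consisting entirely of cylinders would be a single one-one degree, since cylinders that are many-one equivalent are one-one equivalent. So nonirreducibility of the finite-one degree again gives a non-cylinder representative $A$.

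Applying (a) or (b) to this $A$ embeds every countable partial order into the one-one degrees inside that degree.

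\textbf{Main obstacle.} I expect the only delicate point to be terminology: matching "irreducible" (a single one-one degree) with "containing a non-cylinder", and "not isomorphic to a cylinder" with "not in the greatest one-one degree". Both reduce to Myhill's theorem and Rogers' characterisation of the greatest one-one degree as the cylinders, which the paper already quotes.
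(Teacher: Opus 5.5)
Your proposal is correct and follows the paper's route: Mostowski's universal recursive partial order combined with Theorem~\ref{thm-two} gives (a), (b) follows because finite-one equivalence implies many-one equivalence, and the ``furthermore'' clause reduces, via Rogers' and Myhill's results, to the fact that all cylinders in a many-one degree form a single one-one degree, so a nonirreducible degree must contain a non-cylinder. Your mid-argument self-correction is also right: $C \times \mathbb{N}$ need not be finite-one equivalent to $C$, so the direct argument you then give for the finite-one case is the appropriate one.
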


\begin{rem} \label{rem-for-thm-two}
This corollary has a direct consequence: A nonirreducible many-one
degree must consist of several finite-one degrees, for example the
greatest recursive many-one degree consists of three finite-one
degrees. The irreducible many-one degrees consist, in contrast, just
of one finite-one degrees. Odifreddi \cite[Problem 4]{Od1981} asks
for providing explicit criteria of either a many-one degree or of its
representatives such that the many-one degree is irreducible, these
criteria should either for the structure of the degree or for one or all
representing sets. An example for such a criterion is that all sets in
the many-one degree are cylinders; this follows from Myhill's Isomorphism
Theorem \cite{My1955} which states that if two sets are in the same one-one
degree then they are equivalent by a recursive permutation and therefore, if
one of them is of the form $A \times {\mathbb N}$ then the other one can also
be viewed as a set of pairs with an adjusted pairing function---the
adjustment stems from the bijection. Otherwise one could just use
Cantor's original function \cite{Ca1874}. So for Odifreddi's Problem 4,
one of the criteria that a many-one degree is irreducible is that
it coincides with a finite-one degree. Another criterion is that
the one-one degrees inside a given many-one degree form a chain---this
chain has then to collapse to just one single one-one degree. However,
the absence of antichains of length three is then
only equivalent to the following property: either the many-one degree is
irreducible or it coincides with the greatest recursive many-one degree.
This property is then also equivalent to the statement that every finite-one
degree inside the many-one degree coincides with a bounded finite-one degree,
see the sections below for more information about bounded finite-one degrees.
\end{rem}

\section{Finite-one degrees inside many-one degrees}

\noindent
The knowledge of the structure of finite-one degrees inside a many-one
degree is a bit limited compared what one knows about the structure
of one-one degrees inside a many-one degree. However, there are some
differences one can easily see when comparing the structure of one-one
degrees inside finite-one degrees with the structure of finite-one
degrees inside many-one degrees. The next proposition summarises
facts about finite-one degrees, which can be easily proven with
general knowledge about recursion theory. For these, $c(x,y)$ is
Cantor's pairing function \cite{Ca1874}
given as $c(x,y) = (x+y) \cdot (x+y+1)+y$.

\begin{theorem} \label{thm-finiteone}
The following properties hold for the structure of
finite-one degrees inside many-one degrees.
\begin{enumerate}
\item
The join $A \oplus B$ of two sets $A,B$ is the least upper bound
of $A$ and $B$ in the finite-one degrees and thus the finite-one
degrees form an upper semilattice; the same applies to the structure
of finite-one degrees inside many-one degrees.
\item
The greatest finite-one degree inside a many-one degree is always
irreducible, that is, coincides with a one-one degree and consists
only of cylinders.
\item
Every maximal set represents a minimal degree within the nonrecursive
finite-one degrees; however, due to maximal sets being simple, there are only
two of the five recursive finite-one degrees below them: The finite-one
degree of the cofinite sets with at least one nonelement and
the finite-one degree consisting of the single set $\mathbb N$.
\item
Every bi-immune set $A$ represents the least finite-one degree inside
its many-one degree and that many-one degree contains at least two
finite-one degrees. Furthermore, the many-one degree of $A$ does
neither have a least one-one degree nor a minimal one-one degree
and no recursive set is finite-one reducible to $A$.
\end{enumerate}
\end{theorem}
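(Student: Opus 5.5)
I would prove the four items separately, each by a direct construction of reductions, with no priority argument needed.

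For item 1, the maps $x \mapsto 2x$ and $x \mapsto 2x+1$ are one-one reductions of $A$ and $B$ to $A \oplus B$, so $A \oplus B$ is an upper bound. Conversely, let $f$ and $g$ be finite-one reductions of $A$ and $B$ to $C$. Put $h(2x)=f(x)$ and $h(2x+1)=g(x)$. Then $h$ reduces $A \oplus B$ to $C$, and every $y$ has at most $|f^{-1}(y)|+|g^{-1}(y)|$ preimages, so $h$ is finite-one. Since finite-one reductions compose to finite-one reductions, $A\oplus B$ is the least upper bound, and hence finite-one degrees form an upper semilattice. If $A,B$ lie in one many-one degree, then $A\oplus B$ is many-one equivalent to $A$, because $A \oplus B \le_m A$ via $2x\mapsto x$ and $2x+1 \mapsto f_{BA}(x)$. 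So the join stays inside the many-one degree.

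For item 2, by Rogers the cylinders form the greatest one-one degree of the many-one degree, and each cylinder $C$ is finite-one (indeed one-one) above every member. Suppose $D$ is finite-one equivalent to $C$ via a finite-one reduction $f$ from $C$ to $D$. I would show $D$ is a cylinder by upgrading a many-one reduction $q$ of any $X$ to $D$ into a one-one one. Choose a one-one reduction $r$ of $D$ to $C$ and a self-map of $C$ that produces infinitely many distinct $C$-equivalent copies; this is possible because $C \equiv_1 C\times\mathbb N$. Composing with $f$, each point $q(x)$ yields infinitely many candidate images in $D$ of the same membership. These candidates are pairwise distinct except for finite collisions, since $f$ is finite-one and so only finitely many copies can collapse. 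Greedily choosing a fresh candidate for each $x$ then gives a one-one reduction. Hence $D \equiv_1 C$, and this degree is irreducible. This step, controlling collisions under a finite-one map, is the main obstacle. The key point to verify is that the infinitely many copies map to infinitely many distinct values, which holds precisely because preimages under $f$ are finite.

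For item 3, let $M$ be maximal and let $X\le_{fo} M$ be nonrecursive via $f$. The range of $f$ is r.e. and infinite, since $X$ infinite with finite preimages forces an infinite range. Because $M$ is maximal and its complement is cohesive, $\mathrm{range}(f)\cap\overline M$ is infinite or the range is almost contained in $M$. The second case would make $X$ cofinite, hence recursive. In the first case, I would show $M\le_{fo}X$ by mapping $y$ to a preimage of the least range element found whose $M$-status agrees with $y$. To guarantee agreement, I would use that $\overline{M}\cap \mathrm{range}(f)$ is co-r.e. inside the infinite r.e. set $\mathrm{range}(f)$, and that the induced set is again maximal relative to an r.e. enumeration. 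I expect this to need the standard fact that maximal sets restricted to infinite r.e. sets remain maximal there.

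For the recursive part, note that $M$ is simple. Any recursive $R \le_{fo} M$ via $f$ with $R$ having infinitely many nonelements would put an infinite r.e. set $f(\overline R)$ inside $\overline M$, a contradiction. So $R$ is cofinite, and only the cofinite-nonfull degree and $\{\mathbb N\}$ occur; both are easily realised.

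For item 4, given bi-immune $A$ and $B\equiv_m A$ via $f:A\le_m B$, I would note that $f$ is automatically finite-one. Otherwise some $y$ would have an infinite r.e. preimage set lying entirely inside $A$ or inside $\overline A$, contradicting bi-immunity. So $A$ is least. The same argument shows no recursive set is finite-one below $A$ except trivially, and since recursive sets are not bi-immune, no recursive set qualifies. For at least two finite-one degrees, take $B = \{c(x,y): x\in A\}$. This $B$ is the cylinder of $A$, is not bi-immune, and therefore is not finite-one below $A$, since finite-one reductions preserve bi-immunity of the source. This gives two degrees. For the absence of a minimal one-one degree, given any $B\equiv_m A$, I would apply Theorem~\ref{thm-one}-style thinning. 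Taking an infinite recursive $R$ and a suitable finite-one $h$ yields $B'$ strictly one-one below $B$, via the standard Dekker–Myhill argument applied within the least finite-one degree.
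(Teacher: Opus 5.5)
Items 1 and 2, the part of item 3 about recursive sets, and the first two claims of item 4 are fine. Item 2 is the paper's greedy ``fresh copy'' argument, only routed through $q$ and $r$. For ``at least two finite-one degrees'' you use that $A\times\mathbb N$ is not bi-immune, where the paper uses that $A$ is not a cylinder; both work.

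The first genuine gap is the minimality claim in item 3. Your map ``send $y$ to a preimage of the least range element whose $M$-status agrees with $y$'' is not effective. Since $M$ is only r.e., for $y\notin M$ you can never certify agreement with any $z\neq y$, and a $y\notin M$ outside $\mathrm{range}(f)$ cannot be mapped at all. The dichotomy you draw from cohesiveness is a tautology. What you need is its nontrivial half: nonrecursiveness of $X$ forces $\mathrm{range}(f)\cap\overline M$ to be infinite, so cohesiveness of $\overline M$ gives $\overline M\subseteq^*\mathrm{range}(f)$. Then define $g(y)$ by parallel search. If an $x$ with $f(x)=y$ appears first, put $g(y)=x$. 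If $y$ is enumerated into $M$ first, let $g(y)$ be the $y$-th element of a fixed one-one enumeration of $X$; this exists because $X$ is r.e.\ (as $X\le_m M$) and infinite. The finitely many $y\notin M\cup\mathrm{range}(f)$ are sent by a finite table to one fixed nonelement of $X$. Every $x$ other than that nonelement then has at most two $g$-preimages, so $M\le_{fo}X$. This is the paper's construction. Relativized maximality of $M$ inside $\mathrm{range}(f)$ is not needed and does not by itself yield a reduction.

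The second gap is ``no minimal one-one degree'' in item 4, which you only name. Neither ``Theorem-one-style thinning'' nor Dekker--Myhill supplies the strictness, and strictness is the whole content. The missing idea is the shift. For bi-immune $D$, the set $D'=\{x:x+1\in D\}$ is bi-immune, $D'\le_1 D$ via $x\mapsto x+1$, and $D'\equiv_m D$. Moreover $D\not\le_1 D'$: a one-one reduction $g$ would make $x_0=0$, $x_{n+1}=g(x_n)+1$ a recursive injective sequence with $D(x_{n+1})=D'(g(x_n))=D(x_n)$, giving an infinite r.e.\ subset of $D$ or of $\overline D$. The paper applies this only to $A$ itself. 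To get strictly below an arbitrary $C\equiv_m A$, which need not be bi-immune, first take $f:A\le_m C$ (automatically finite-one). Let $s_0<s_1<\cdots$ enumerate the recursive set of least $f$-preimages and put $D=\{n:s_n\in A\}$. Then $D$ is bi-immune, $D\equiv_m A$, and $D\le_1 C$ via $n\mapsto f(s_n)$. Hence $D'\le_1 C$, while $C\le_1 D'$ would give $D\le_1 D'$, which is impossible.
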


\begin{proof}
For the first item, consider $A,B$ and assume that $A,B$ are both
finite-one reducible to a set $C$ via $f,g$. Then for each $y$ there are only
finitely many $v$ with $f(v) = y$ and finitely many $w$ with
$g(w) = y$. Now let $h(2v) = f(v)$ and $h(2w+1) = g(w)$. Now,
for all $x$, $(A \oplus B)(x) = C(h(x))$, hence
$h$ is a many-one reduction from $A \oplus B$ to $C$.
Furthermore, there are, for each $y$, only finitely many $x$
with $h(x) = y$, as the number of these $x$
is the sum of the number of all $v$ with $f(v)=y$ and the
number of all $w$ with $g(w) = y$. Thus $h$ is a finite-one
reduction witnessing that $A \oplus B$ is finite-one
reducible to $C$. Furthermore, it is clear that both $A,B$ are
finite-one reducible to $A \oplus B$. Thus $A \oplus B$ represents
the least upper bound of $A$ and $B$ in the finite-one degrees.

For the second item, just note that if $A \times {\mathbb N}$ represents
the largest one-one degree inside a many-one degree and
if $A \times {\mathbb N}$ is finite-one
reducible to $B$ in the same many-one degree via $f$ then one can make
$f$ to be one-one as follows: Given $f$, one constructs $g$ inductively
over an enumeration of all pairs $c(x,y)$. If $f(c(x,y))$ is not yet in
the range of $g$ then one defines $g(c(x,y)) = f(c(x,y))$ else one
finds the first $z$ with $f(c(x,z))$ not in the so far defined range
of $g$ and let $g(c(x,y)) = f(c(x,z))$. The verification that this gives
a one-one reduction which is correct is left to the reader.

For the third item, let $A$ be a maximal set and assume that
$B$ is many-one reducible to $A$ via $f$; $B$ is therefore a
recursively enumerable set. Either finitely many
$y \notin A$ are in the range of $f$ and $B$ is recursive
or all but finitely many of the $y \notin A$ are in the range
of $f$ and $B$ is in the same many-one degree as $A$. Assume
the second one, as only that case is interesting; now one
constructs a finite-one reduction $g$ from $A$ to $B$.
$g(y)$ is defined to be the $x$ in that case which applies first;
the first case applies only for finitely many $y$ and they can be
tabled up in the algorithm; which of the cases two or three strikes
first in the remaining cases is determined by parallel search.
\begin{enumerate}
\item $y$ is neither in the range of $f$ nor in the set $A$ and
   $x$ is the smallest number not in $B$;
\item The number $x$ is the first $x$ found with $f(x) = y$;
\item $y$ is enumerated into $A$ and $x$ is the $y$-th element
   to be enumerated into $B$ by some fixed one-one enumeration of $B$;
   note that $B$ is infinite and every infinite recursively enumerable
   set has a recursive one-one enumeration.
\end{enumerate}

\medskip
\noindent
Independently of which case first holds, let $g(y)$ be the so found $x$
in the corresponding case. The function is finite-one for the following
reasons: The $x$ in the first case can only be chosen by the finitely
many $y$ which are neither in the range of $f$ nor in $A$ as well as
by one $y$ with $f(x) = y$. The second case contributes for each $x$
at most one $y$ with $g(y) = x$, this is the $y$ with $f(x) = y$.
The third case produces also, for each $x$, at most one $y$ with $g(y) = x$,
as that $x$ is the $y$-th element in a fixed recursive one-one enumeration
of $B$. Thus, except for the smallest $x$ not in $B$, each element in the
range of $g$ is only the image of at most two numbers.

For the fourth item, let $A$ be a bi-immune set, that is, a set
$A$ such that neither $A$ nor its complement has an infinite
recursive subset. Furthermore, assume that $A$ is many-one
reducible to some set $B$ via $f$. Then for each $y$, the set
$\{x: f(x) = y\}$ is a recursive set which is either a subset of
$A$ or its complement; thus it is finite. Therefore $f$ is a
finite-one reduction to $B$. As $A$ is bi-immune, $A$ is not a
cylinder and therefore there are at least two finite-one degrees
in the many-one degree of $A$. Furthermore, the finite-one
degree of $A$ does not have a least one-one degree, as given $A$ and
$B = \{x: x+1 \in A\}$, the set $B$ has a one-one degree strictly below
that of $A$ and is also bi-immune. Assume by contradiction that one
could one-one reduce $A$ to $B$ via some recursive $g$, then let $x_0 = 0$
and $x_{n+1} = g(x_n)+1$ for all $n$. For all $n$, $A(g(x_n)+1) = B(g(x_n))
= A(x_n)$ and $x_{n+1} \notin \{x_0,x_1,\ldots,x_n\}$, thus the
sequence $x_0,x_1,\ldots$ would become an infinite recursive enumeration
of numbers which are either all in $A$ or all outside $A$, in contradiction
to the bi-immunity of $A$. Furthermore, no recursive set is finite-one
reducible to any bi-immune set, as each recursive set is infinite
or coinfinite and thus, if the reduction would exist, $A$ would have
an infinite recursive subset or its complement would have an infinite
recursive subset.
\end{proof}

\noindent
The results obtained so far allow to classify the finite-one
degrees into three groups. The first and the third group have
uncountably many members each (given by cylinders and the biimmune
sets which both exist uncountably often and from each group, only
countably many can go into one finite-one degree) and the second
group consists only of two finite-one degrees, both contained in
the greatest recursive many-one degree.

\begin{corollary} \label{cor-finiteone}
Assume that $A$ is a set. Then for the finite-one degree of $A$, exactly one
of the following statements is true and each possibility can occur.
\begin{enumerate}
\item The finite-one and one-one degree of $A$ coincide and $A$ is in
  the greatest one-one degree of its many-one degree;
\item The finite-one degree of $A$ is an ascending chain isomorphic
  to the natural numbers with their natural ordering and consists either
  of all nonempty finite sets or of all cofinite sets with at least one
  nonelement;
\item Every recursive partial order (including an antichain) can be embedded
  into the finite-one degree of $A$ with each representative of the antichain
  being strictly one-one above $A$ itself; these finite-one degrees also do not
  contain a greatest one-one degree.
\end{enumerate}
\end{corollary}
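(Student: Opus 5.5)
The plan is a case split on $A$, followed by a check that the three cases exclude each other and that each one occurs.

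\emph{Case 1: $A$ is a cylinder, i.e.\ in the greatest one-one degree of its many-one degree.} By item 2 of Theorem~\ref{thm-finiteone}, the greatest finite-one degree inside the many-one degree is irreducible. If $B \equiv_{f1} A$, then $B$ is many-one equivalent to $A$, so $B \leq_1 A$ since $A$ is the largest one-one degree. Also $A \leq_{f1} B$, and the one-one conversion from the proof of item 2 upgrades this to $A \leq_1 B$. So the finite-one degree equals the one-one degree of $A$, and possibility 1 holds.

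\emph{Case 2: $A$ is recursive and not a cylinder.} The empty set and $\mathbb N$ are cylinders (they are isomorphic to $\emptyset\times\mathbb N$ and $\mathbb N\times\mathbb N$). Infinite coinfinite recursive sets are cylinders too, since they are recursively isomorphic to $\mathbb N \oplus \emptyset$, which is $1$-equivalent to its product with $\mathbb N$. So $A$ is finite nonempty or cofinite and coinfinite. I then show directly that the nonempty finite sets form one finite-one degree. The sets with exactly $n$ elements form one one-one degree $D_n$. We have $D_n \leq_1 D_{n+1}$, by mapping the $n$ elements into $n$ of the $n+1$ elements and the rest injectively into the complement. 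The reverse fails, since a one-one reduction is injective on elements. Any two nonempty finite sets are finite-one equivalent: map all elements onto one element of the target and the complement identically (shifted) into the infinite complement. By complementation the same holds for cofinite sets with at least one nonelement. This gives an ascending chain of order type $\omega$, which is possibility 2. Here "each possibility can occur" is witnessed by $\{0\}$ and by any cylinder such as $K$.

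\emph{Case 3: $A$ is neither recursive nor a cylinder.} Then $A$ is infinite and coinfinite, so Theorem~\ref{thm-two}, equivalently Corollary~\ref{cor-for-thm-two}, embeds every recursive partial order, and Theorem~\ref{thm-one} gives sets $B_k$ strictly one-one above $A$. It remains to show that there is no greatest one-one degree inside the finite-one degree. Suppose $C$ were greatest. Every $B_k$ in the antichain of Theorem~\ref{thm-one} satisfies $B_k \leq_1 C$. Applying Theorem~\ref{thm-one} to $C$ instead of $A$ yields $C' \equiv_{f1} C$ with $C \leq_1 C'$ and $C' \not\leq_1 C$, unless $C$ is a cylinder. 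Since $C$ is many-one equivalent to $A$ and $A$ is not a cylinder, the greatest one-one degree of this many-one degree is not in the finite-one degree: if it were, item 2 would make it irreducible and equal to $A$'s degree. So $C$ is not a cylinder, $C'$ lies strictly above $C$, and this is a contradiction. I expect this to be the main step. The subtle point is checking that Theorem~\ref{thm-one}'s "strictly above" really holds for $C$: its antichain members are $\geq_1 C$, and two of them cannot both be $\leq_1 C$, since then both would be $1$-equivalent to $C$ and hence comparable.

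\emph{Exclusivity and occurrence.} The three cases partition all sets, and the degree structures differ: possibility 1 has a single one-one degree, possibility 2 is an $\omega$-chain with no antichain of size 2, and possibility 3 contains infinite antichains. Possibility 3 occurs, for example, for bi-immune sets or simple sets.
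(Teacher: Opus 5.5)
Your proposal is correct and follows the paper's route: the cylinder case via item~2 of Theorem~\ref{thm-finiteone}, the recursive non-cylinder case by the standard direct analysis of finite and cofinite sets, and the remaining case via Theorems~\ref{thm-one} and~\ref{thm-two}, where applying Theorem~\ref{thm-one} to a putative greatest $C$ supplies the ``no greatest one-one degree'' clause that the paper leaves implicit. One small fix: in Case~2, ``cofinite and coinfinite'' should read ``cofinite with nonempty complement'', and you should add that a finite-one preimage of a finite set is finite, so these finite-one degrees contain no other sets.
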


\noindent
The collapse in the first case follows from the second item
in Theorem~\ref{thm-finiteone}. The second item is well-known
and the third item follows from Theorems~\ref{thm-one} and~\ref{thm-two}
and is explictely stated in Corrolary~\ref{cor-for-thm-two}.

Note that there are five recursive finite-one degrees, the two mentioned
in the second item of Corollary~\ref{cor-finiteone} and the
degrees $\{\emptyset\}$ and $\{{\mathbb N}\}$ and the finite-one
degree of all infinite and coinfinite recursive sets, the last three
consist all of a single one-one degree.

\section{Bounded finite-one degrees}

\noindent
Bounded finite-one reducibility is the variant of one-one
reducibility. Normally the word ``bounded'' means that the number
of queries are bounded by a constant,
following the notation used by Odifreddi \cite{Od1989,Od1999} and
others (like \cite{BHHV2002} for bounded reducibilities).
As Maslova \cite{Ma1979} called the finite-one reducibility
``bounded many-one reducibility'' or $bm$-reducibility,
the bounded finite-one reducibility would then be something
like ``constant bounded many-one reducibility'' in her terminology. 
Thus, as defined in Definition~\ref{def-reduction},
a bounded finite-one reduction $f$
from $A$ to $B$ is a finite-one reduction from $A$ to $B$ with the
additional constraint that there is a constant $c$ such that for
each $y$ there are at most $c$ numbers $x$ with $f(x) = y$. This
constant bound directly shows that bounded finite-one reducibilities
are $bm$ and $bm'$ reducibilities in the terminology of Maslova \cite{Ma1979}.
However, there is no direct way to make them to $bm''$-reducibilities.
Certain parallel properties are preserved when going from finite-one
to bounded finite-one degrees: There are exactly five such recursive
degrees. Furthermore, the join
$\{2x: x \in A\} \cup \{2x+1: x \in B\}$ of two sets $A,B$ represents
the least common upper bound in both, the finite-one degrees and the
bounded finite-one degrees and thus the degrees form an upper semilattice.
Furthermore, properties of recursively enumerable sets like being simple
or hypersimple are inherited downwards (except to recursive sets) along
finite-one and bounded finite-one reductions \cite{Ma1979}.
The next result shows, however, that
bounded finite-one degrees might consist of infinite ascending chains
of one-one degrees where each two of them are comparable; this differs
strongly from the case of finite-one degrees where the nonrecursive
degrees are either irreducible or contain antichains of one-one degrees.

\begin{rem} \label{rem:od-bfo}
The proof that a many-one degree either consists of a single one-one degree
or contains an infinite ascending chain of one-one degrees in Odifreddi's
book \cite[Proposition VI.6.13]{Od1989} works verbatim also for bounded
finite-one degrees; it is based on Young's corresponding result from 1966
\cite{Yo1966b}. Thus every bounded finite-one degree is either irreducible
and coincides with its one-one degree or contains an infinite ascending
chain of one-one degrees. The basic idea of the proof is that if $A$ is a
set which is not a cylinder then $A,A \oplus A, A \oplus A \oplus A, \ldots$
is an infinite strictly ascending chain in the one-one degrees; obviously
all these sets are bounded finite-one equivalent.
\end{rem}

\begin{theorem} \label{thm-boundedfiniteone}
There is a nonrecursive bounded finite-one degree such that the collection
of all its one-one degrees form an ascending chain of the same order type
as the natural numbers.
\end{theorem}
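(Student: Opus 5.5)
We construct a nonrecursive set $A$ so that its bounded finite-one degree consists exactly of the one-one degrees of $A, A\oplus A, A\oplus A\oplus A,\ldots$. By Remark~\ref{rem:od-bfo}, if $A$ is not a cylinder these form a strictly ascending chain. We therefore need to show that every $B$ bounded finite-one equivalent to $A$ is one-one equivalent to the $k$-fold join $A^{(k)}$ for some $k$.

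The natural candidate is a set with a strong rigidity property built by a forcing or finite-extension argument. We aim for $A$ with the following property: for every recursive $f$ and every set $C$, if $f$ many-one reduces $C$ to $A$, then almost every $y$ in the range of $f$ is hit by a preimage set whose structure is controlled. To make this concrete, we build $A$ as a sufficiently generic set with respect to a suitable notion of forcing. The forcing conditions are finite partial characteristic functions together with a commitment that $A$ is \emph{biimmune} and that every recursive function $g$ which is not almost identity on the relevant part fails to be a selfreduction. Concretely, the requirements are:

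(i) $A$ is bi-immune. Then, by the fourth item of Theorem~\ref{thm-finiteone}, every many-one reduction into $A$ is automatically finite-one, and no recursive function mapping infinitely many $x$ to distinct $g(x)\neq x$ is a selfreduction.

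(ii) For each recursive $g$ with $g(x)\neq x$ for infinitely many $x$, there is an $x$ with $A(g(x))\neq A(x)$. This is a standard genericity (1-generic) requirement: extend the finite condition at a fresh $x$ with $g(x)$ unassigned or already assigned, and choose $A(x)$ accordingly.

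Given $B \equiv_{bfo} A$ via $h:B\to A$ and $u:A\to B$ with bound $c$, the composition $h\circ u$ is a many-one selfreduction of $A$. By (ii) it equals the identity on almost all $x$. Hence $u$ is injective on a cofinite set and $h$ is almost a left inverse of it. In particular $h$ is surjective onto a cofinite set.

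For almost every $y$, let $n(y)=|h^{-1}(y)|\in\{1,\ldots,c\}$; it is $\Delta_2$-ish but we want it eventually recursive. Consider the sets $Y_k=\{y: n(y)\ge k\}$. Each $Y_k$ is recursively enumerable. We add a genericity requirement against r.e.\ sets:

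(iii) For each $e$ and each $k\le c$, if $W_e$ is an r.e.\ set, the pair $A$ restricted to $W_e$ and to its complement is such that a mixed bound breaks. Without such a requirement, one could build $B$ from $A$ by doubling preimages on an r.e.\ nonrecursive set $W$, which would create one-one degrees outside the chain. So the genericity must ensure that any such $B$ is still one-one equivalent to some $A^{(k)}$.

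The key argument is the following. Take $k$ maximal with $Y_k$ infinite, and arrange by genericity that $A$ restricted to $Y_k$ is one-one equivalent to $A$ (for instance via $u$). Then we one-one reduce $B$ into $A^{(k)}$ by sending the up-to-$k$ preimages of $y$ to distinct copies of $y$. Conversely, we one-one reduce $A^{(k)}$ into $B$ using that $Y_k$ is an infinite r.e.\ set, mapping the copies of $y$ into preimages of suitably chosen $y'\in Y_k$ with $A(y')=A(y)$. Finding such a $y'$ requires a recursive partial selfmap of $A$ into $Y_k$, and this is the main obstacle: a generic $A$ has no nontrivial selfreductions. The fix is to choose $A$ not fully generic but \emph{generic relative to a built-in structure}. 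Let $A=\{\langle x,m\rangle : x\in G\}$ be a cylinder-like set over a generic $G$, but only with a bounded ``width'' recorded by a fast-growing function. This gives $A$ enough controlled selfreductions that copies can be shuffled, while no copying operation increases multiplicity without bound.

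Verifying that this compromise simultaneously keeps $A$ a non-cylinder and makes every $B$ land on some $A^{(k)}$ is where I expect most of the work to lie.
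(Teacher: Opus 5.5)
Your proposal stops exactly where the proof has to happen, and the direction it takes cannot be completed. The rigidity you impose in (ii), that every recursive selfreduction of $A$ is almost the identity, makes the bounded finite-one degree of $A$ \emph{larger}, not a chain.

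Fix any infinite coinfinite recursive $R=\{r_0<r_1<\cdots\}$ and let $B=\{2y: y\in A\}\cup\{2z+1: r_z\in A\}$. For every $A$, this $B$ is bounded finite-one equivalent to $A$ with bound $2$, and $A\le_1 B\le_1 A\oplus A$.
\begin{itemize}
\item Suppose $q$ one-one reduced $B$ to $A$. Then $y\mapsto q(2y)$, and the map sending $r_z$ to $q(2z+1)$ and fixing every $x\notin R$, would be total recursive selfreductions of $A$. So (ii) gives $q(2r_z)=r_z=q(2z+1)$ for almost all $z$, contradicting injectivity.
\item Suppose $p$ one-one reduced $A\oplus A$ to $B$. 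Composing $p$ with the map $2y\mapsto y$, $2z+1\mapsto r_z$ gives two selfreductions of $A$. Then (ii) forces $p(2y)=p(2y+1)=2y$ for almost all $y\notin R$.
\end{itemize}
Hence $A<_1B<_1A\oplus A$, and $B$ is one-one equivalent to no $k$-fold selfjoin of $A$.

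Because the multiplicity pattern here is recursive, no requirement of type (iii) about r.e.\ sets can exclude it. Your ``cylinder-like'' fix over a generic $G$ does not escape this either. If $A$ is a union of recursive blocks $I_x$ with $x\in G$, then any one-one reduction sending some element of block $x$ into another block yields a nontrivial recursive selfreduction of $G$. So block sizes are one-one invariants up to finitely many blocks, and adding one element to each block indexed by a recursive infinite coinfinite set again lands strictly between $A$ and $A\oplus A$.

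Item (i) also misreads Theorem~\ref{thm-finiteone}. Bi-immunity makes reductions \emph{from} $A$ finite-one, not reductions into $A$. It also does not exclude nontrivial selfreductions, as $G\oplus G$ for bi-immune $G$ shows.

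The missing idea is to make every r.e.\ multiplicity pattern harmless rather than to make $A$ rigid, and the paper does this with a cohesive set.
\begin{itemize}
\item \emph{Choice of $A$.} Take a maximal set $E_0$ with cohesive complement $E_3$, and split $E_0$ by Sacks splitting into r.e.\ sets $E_1,E_2$ of incomparable Turing degree. Let $A$ be a hyperimmune-free set with $E_1\subseteq A$ and $E_2\cap A=\emptyset$. Then $A$ is nonrecursive, and, being neither r.e.\ nor co-r.e., it meets $E_3$ and its complement infinitely.
\item \emph{Every $B$ in the degree is a selfjoin.} Let $f$ be a bounded finite-one reduction from $B$ to $A$. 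Each r.e.\ set $\{y: |\{x: f(x)=y\}|\geq k\}$ contains almost all or only finitely many elements of $E_3$. So almost every $y\in E_3$ has exactly $\ell$ preimages for one fixed $\ell$, and $\ell\geq 1$ since otherwise $B$ would be recursive. Preimages of points in $E_1$ or $E_2$ can be redistributed freely inside these r.e.\ sets, on which $A$ is constant. This gives a one-one equivalence between $B$ and the $\ell$-fold selfjoin of $A$.
\item \emph{Strictness.} Suppose $h'$ one-one reduced the $(\ell+1)$-fold selfjoin to the $\ell$-fold one. Together with $E_1$ and $E_2$, it generates an r.e.\ equivalence relation respecting $A$. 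By cohesiveness each class meets $E_3$ only finitely, so some class is a finite subset $E_4$ of $E_3$. Then $h'$ would have to inject $(\ell+1)|E_4|$ elements into $\ell|E_4|$ places, which is impossible.
\end{itemize}
So the right $A$ has many selfreductions off a cohesive set, which is the opposite of the generic sets you were aiming for.
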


\begin{proof}
Let $E_0$ be a maximal set with complement $E_3$ and
split $E_0$ using the Sacks splitting theorem \cite{Sa1963a} into
two recursively enumerable sets $E_1$ and $E_2$ of incomparable
Turing degree; once that is done, one can choose using the
hyperimmune-free basis theorem of Jockusch and Soare \cite{JS1972}
a hyperimmune-free set $A$ such that $A$ is a superset of $E_1$ and its
complement a superset of $E_2$.

Now let $B$ be a set in the bounded finite-one
degree of $A$. One now shows that $B$ is one-one equivalent to the
join of $\ell$ copies of $A$ for some natural number $\ell \geq 1$.

To see this, one considers a bounded finite-one reduction $f$ from $B$ to $A$.
Furthermore, let $\ell$ be the biggest number such that for infinitely
many $y \in E_3$ there are $\ell$ different $x$ with $f(x) = y$; due to $f$
being bounded finite-one, such a maximal $\ell$ must exist. Furthermore,
$\ell > 0$ as otherwise $f$ maps almost all numbers either to $E_1$ or
to $E_2$ which allows to construct a decision-procedure for $B$, that is,
$B$ would be recursive and not in the bounded finite-one degree of $A$.
This implies that almost all members of $E_3$ are $\ell$ times in the
range of $f$ and moving finitely many elements from $E_3$ to $E_1$ and $E_2$
will result in all elements of the new
$E_3$ appearing exactly $\ell$ times in the range of $f$.
There are uniformly recursive ascending unions with
$E_1 = \cup_s E_{1,s}$, $E_2 = \cup_s E_{2,s}$ and $E_{1,0}$ and
$E_{2,0}$ being infinite.
For each $x$, let $y = f(x)$ and $k$ be the number such that $x$ is
the $k$-th number mapped to $y$ when looking at the $x$ in ascending
order. Furthermore, let $s$ be the least number such that one of the
following cases applies and define $h$ by the first case which applies.
\begin{enumerate}
\item If $y \in E_{1,s}$ then let $h(x)$ be the first
  element of $E_1 \times \{1,2,\ldots,k\}$ not yet in the range of $h$;
\item If $y \in E_{2,s}$ then let $h(x)$ be the first
  element of $E_2 \times \{1,2,\ldots,k\}$ not yet in the range of $h$;
\item If $k \leq \ell$ and $y \notin E_{1,s} \cup E_{2,s}$ 
  then let $h(x) = c(y,k)$.
\end{enumerate}
By construction, $h$ is one-one. Furthermore, the range of $h$ is
${\mathbb N} \times \{1,2,\ldots,\ell\}$,
as all numbers $y \in E_3$ are exactly $\ell$ times in the range
of $f$ and all numbers $y \in E_1 \cup E_2$ are eventually enumerated
into $E_1$ or $E_2$, respectively, and then mapped in a bijective way
to the target set following its enumeration. Thus $B$ is one-one
equivalent to the $\ell$-fold selfjoin of $A$. This directly implies
that the one-one degrees inside the bounded finite-one degree of $A$ are
linearly ordered, as $B$ is one-one reducible to $C$ whenever it
holds that $B$ is one-one equivalent to the $\ell$-fold selfjoin
of $A$ and $C$ is one-one equivalent to the $\ell'$-fold selfjoin
of $A$ and $\ell \leq \ell'$. It remains to show that the hierarchy
stands and that the bounded finite-one degree of $A$ is not irreducible.

Assume now by way of contradiction that $h'$ one-one reduces the
$\ell+1$-fold selfjoin $B$ of $A$ to the $\ell$-fold selfjoin $C$ of $A$.
Now define the following equivalence relation on the set of natural numbers:

Now let $\sim$ be the smallest equivalence relation which for all $x,y$
enforces $x \sim y$ whenever one of the below conditions holds:
\begin{enumerate}
\item $x = y$ (enforcing reflexiveness);
\item $y \sim x$ (enforcing symmetry); 
\item There are $k,z_1,z_2,\ldots,z_k$ with $x \sim z_1$ and $z_1 \sim z_2$
      and $\ldots$ and $z_k \sim y$ (enforcing transitiveness);
\item $x,y$ are both in $E_1$ (enforcing all of $E_1$ to go into
      one equivalence class);
\item $x,y$ are both in $E_2$ (enforcing all of $E_2$ to go into
      one equivalence class);
\item There are $i \in \{1,2,\ldots,\ell+1\}$ and $j \in \{1,2,\ldots,\ell\}$
      with $h'(c(x,i)) = c(y,j)$.
\end{enumerate}

\medskip
\noindent
So $\sim$ is an recursively enumerable equivalence class which respects
$A$, that is, $x \sim y$ implies that either both $x,y$ are in $A$
or none of $x,y$ is in $A$. The reason for this is that none of the
above rules enforces that an element of $A$ and a nonelement of $A$
become equivalent.

By the cohesiveness of $E_3$, an equivalence class of $\sim$ either
has a finite intersection with $E_3$ or contains almost all elements
of $E_3$. The latter cannot happen as both $A$ and its complement have
an infinite intersection with $E_3$. Thus all equivalence classes have
only a finite intersection with $E_3$. Therefore, only finitely many
elements of $E_3$ belong to the equivalence classes of $E_1$ and $E_2$
and there must be further finite equivalence classes which are a complete
subset of $E_3$. Let $E_4$ be such a finite equivalence class and let
$k$ be the number of its elements. Then $h'$ maps the $k \cdot (\ell+1)$
elements of $E_4 \times \{1,2,\ldots,\ell+1\}$ to the
$k \cdot \ell$ elements of $E_4 \times \{1,2,\ldots,\ell\}$.
Thus $h'$ cannot be one-one and therefore the bounded finite-one degree
of $A$ consists of infinitely many one-one degrees.
\end{proof}

\begin{rem}
If one does the above construction of Theorem~\ref{thm-boundedfiniteone}
with two Turing incomparable maximal sets $E_4$ and $E_5$ and then
considers $A \oplus B$ for the so obtained sets $A$ and $B$
of hyperimmune-free degree, then the one-one degrees of the
bounded finite-one degree of $A \oplus B$ consider of all $C_{i,j}$ being
joins of $i$ copies of $A$ and $j$ copies of $B$ with $i,j \geq 1$.
Now $C_{i,j}$ is one-one reducible to $C_{i',j'}$ if and only if
$i \leq i'$ and $j \leq j'$. Thus the one-one degrees are partially
ordered the same way as pairs of natural numbers and so contain finite
antichains of arbitrary length but no infinite antichains. An example
of a finite antichain of length $5$ is $(5,1),(4,2),(3,3),(2,4),(1,5)$
which translates into the antichain $C_{5,1},C_{4,2},\ldots,C_{1,5}$
of one-one degrees inside the bounded finite-one degree of $A \oplus B$.

Furthermore, the bounded finite-one degree of a Martin-L\"of random set $A$
contains an infinite antichain of one-one degrees where the representatives
are $B_x = A \oplus \{y: c(x,y) \in A\}$.
\end{rem}

\begin{rem} \label{rem-unbounded-vs-bounded}
It might be important to note that nonrecursive nonirreducible
finite-one degrees contain an antichain of bounded finite-one degrees. To
obtain this result, one modifies Theorem~\ref{thm-two} and its proof as
follows: First, in the numbering of all $\varphi_e$, one equips the functions
$\varphi_1,\varphi_2,\ldots$ with a boundedness-function $bound$ such that
for each $y$ there are at most $bound(e)$ many $x$ with $\varphi_e(x) = y$.
Note that introducing this bound makes the numbering nonacceptable, but it
still covers all reductions --- if for the first instance of the function,
$bound(e)$ was too small, then one produces another index $e'$ where then
$bound(e')$ is bigger. Functions intending to violate their bound are forced
to be partial in order to preserve the bound. In the light of this practice,
one could even just set $bound(e) = e$.

Furthermore, in Algorithm~\ref{algo-two} of the proof,
the corresponding sentence is adjusted to the following form:
\begin{quote}
While the number of $x$ with $h_i(x) = y$ is less or equal to the product
of $bound(e)$ and the number of $x'$ with $h_j(x') \leq y$ do begin
select the least $x$ where $h_i(x)$ is not yet defined and
let $h_i(x) = y$ end.
\end{quote}
With the adjustments following from this change in the proof, the proof
then actually shows that every recursive partial order can be embedded
effectively into every nonrecursive and nonirreducible finite-one degree
with respect to the bounded finite-one degrees inside that degree and not
just with respect to the one-one degrees inside it. The recursive
finite-one degrees coincide each with one bounded finite-one degree, thus
they form a special case, as two of these degrees have infinitely many
one-one degrees inside them.
\end{rem}

\section{Conclusion}

\noindent
The present work studies the collection of one-one degrees inside
finite-one degrees and the collection of finite-one degrees inside
many-one degrees. For finite-one degrees, it is shown that they
consist of a single one-one degree if they are the greatest finite-one
degree in their many-one degree; otherwise they consist of infinitely
many bounded finite-one degrees and contain an antichain of these
which implies that they also contain an antichain of one-one degrees.
This solves an open problem which was around since Young
\cite{Yo1966b} embedded the countable dense linearly ordered set into all
nonrecursive many-one degrees which consist of several one-one
degrees; Odifreddi \cite{Od1981,Od1989} stated this problem explicitly
as open. Significant progress towards this problem was done by
D\"egtev \cite{De1976} and Batyrshin \cite{Ba2021},
who proved the existence of antichains in all r.e.\ nonrecursive
nonirreducible many-one degrees and all limit-recursive nonrecursive
nonirreducible many-one degrees, respectively.

The present results answer this question in full generality and affirmatively.
Furthermore, the paper deepens the study of the finite-one degrees
and bounded finite-one degrees which are between one-one-degrees and
many-one degrees so that the following inclusion relation between these
degrees hold (for the degrees of a fixed set):
\begin{quote}
One-one degree $\subseteq$ bounded finite-one degree $\subseteq$
finite-one degree $\subseteq$ many-one degree.
\end{quote}
See \cite{Ba2021,BMO2019,HH1994,KW2021,Ma1979} for prior work on
finite-one reducibility in recursion theory and computational complexity.
The following paragraphs (a), (b) and (c) give an overview of the
results of the present work.

\medskip
\noindent
{\bf (a) Finite-one degrees inside many-one degrees.}
Every many-one degree consists of at least one and up to countably
many finite-one degrees; among those is a greatest finite-one degree
which coincides with its one-one degree (that is, it is an irreducible
one-one degree) and the many-one degree is irreducible if and only if
it has only one finite-one degree. The greatest recursive many-one degree
consists of three finite-one degrees which are those of all finite
nonempty sets, all cofinite sets with a nonempty complement and
all other recursive sets; the third degree is irreducible and
the other two degrees coincide
with bounded finite-one degrees which in turn are ascending chains of
one-one degrees and these ascending chains are
order-isomorphic to the natural numbers and their natural order $<$.
Theorem~\ref{thm-two} shows that nonrecursive and nonirreducible many-one
degrees have only one irreducible finte-one degree inside them, namely
the greatest one, and furthermore at least one nonirreducible further
finite-one degree, they are described under (b).
The number of finite-one degrees inside a nonirreducible many-one
degree is at least two and at most countable, where the greatest
recursive many-one degree gives the number three and other finite numbers
are open. Maslova \cite{Ma1979} showed that nonrecursive recursively enumerable
many-one degrees consist either of a single or of infinitely many
finite-one degrees.

Furthermore, Remark~\ref{rem-for-thm-two} points
out that the irreducible many-one degrees can be characterised as
those consisting of exactly one finite-one degree, thus providing one
possible characterisation for those requested by
Odifreddi \cite[Problem 4]{Od1981} who asked for criteria characterising
when many-one degrees are irreducible/nonirreducible.

\medskip
\noindent
{\bf (b) One-one degrees and Bounded finite-one degrees inside
finite-one degrees.}
Irreducible finite-one degrees coincide with their one-one degree.
Nonrecursive nonirreducible finite-one degrees satisfy that they
embed antichains and all other recursive partial orders by
Theorems~\ref{thm-one} and~\ref{thm-two}.
These results in particular show that
for nonrecursive and nonirreducible finite-one degrees, they allow
to embed by a uniformly recursive family
of invertible finite-one reductions a sequence of one-one degrees which
are ordered according to any given recursive partial order; one can furthermore
achieve that the so embedded one-one degrees are all not bounded finite-one
equivalent with each other, see Remark~\ref{rem-unbounded-vs-bounded}.
As every nonirreducible and nonrecursive many-one degree contains an
nonirreducible finite-one degree, this answers the question of
Odifreddi~\cite[Problem 4]{Od1981}.

\medskip
\noindent
{\bf (c) One-one degrees inside bounded finite-one degrees.}
The partial orders of one-one degrees inside finite-one degrees
(by one-one reducibility) can allow more variety than in the
case of finite-one or many-one degrees in the sense that more
different cases arise. Theorem~\ref{thm-boundedfiniteone} shows
that the one-one degrees inside a bounded finite-one degree can
be infinitely many which are linearly ordered --- the order type
is that of the natural numbers with $<$. Furthermore, there are
bounded finite-one degrees which have finite but no infinite antichains
and those which have infinite antichains.

\medskip
\noindent
{\bf Open Questions.}
The following questions on the structure of reducibilities between one-one and
many-one degrees are still open:
\begin{enumerate}
\item Does every nonrecursive many-one degree have a least finite-one
      degree inside?
\item Are there nonrecursive many-one degrees consisting of at least two
      but at most finitely many finite-one degrees? 
\item Are there bounded finite-one degrees consisting exactly of a dense
      linearly ordered set of one-one degrees?
\end{enumerate}

\medskip
\noindent
{\bf Further remarks.}
Recall that Theorem~\ref{thm-boundedfiniteone} provides a bounded finite-one
degree entirely consisting of linearly ordered one-one degrees forming an
ascending chain. So one might ask whether one also have other linear orders
than this specific one of the natural numbers? 

The construction of Theorem~\ref{thm-boundedfiniteone} was attempted to be
done more generally---Stephan \cite{St2001} studied important questions
about strong degrees and wanted in this paper also address other open
questions about strong degrees including the one addressed in
the current paper. His intention was to
construct a whole many-one degree in which the one-one degrees are linearly
ordered and together with Zhang---during his exchange to Singapore as an
undergraduate student---he tried another time to make this construction work.
Now Theorem~\ref{thm-one} shows that generalising
Theorem~\ref{thm-boundedfiniteone} to finite-one or many-one degrees
is impossible. So the open question is which linear orders can be realised
by the set of all one-one degrees inside a bounded finite-one degrees and
whether there are any besides the one-element linear order and the linear
order of natural numbers. The remarks after the theorem provide other partial
orders which are the orders of a bounded finite-one degree, but no further
linear order.

Concerning the number of finite-one degrees inside a many-one degrees,
Maslova \cite{Ma1979} was able to show that for recursively enumerable
degrees, this number is either $1$ or $\infty$, with the greatest recursive
degree being the only exception.

Note that finite-one degrees and bounded finite-one
degrees are less investigated than one-one
degrees and many-one degrees, thus more questions are open for
these than for the other two types of degrees; Batyrshin (in private
communication) pointed the authors to the work of Maslova. However,
also he is not aware of other systematic studies involving finite-one
or bounded finite-one degrees with respect to the world of sets.
A bit different is the picture for numberings. Furthermore, finite-one
functions are used frequently in topology and set theory to
define reductions in their fields.

There are two parallels between the study of numberings and very strong degrees:
Bazhenov, Mustafa and Ospichev \cite{BMO2019} considered also finite-one
reducibility (there called bounded reducibility due to the name used
by Maslova) to compare numberings of recursively enumerable sets.
The same could be done with numberings of recursive functions or
other objects of consideration. Bazhenov, Mustafa and Ospichev
showed that for finite-one reducibility, there is a uniformaly
recursively enumerable family with infinitely many members such that its
numberings form under this reducibility an infinite lattice; furthermore,
finite lattices with exactly $2^k-1$ members for $k=1,2,3,\ldots$ exist.
For the usual numberings under many-one reducibility, this is impossible,
as Khutoretskii \cite{Kh1971} showed that there are either only one or
infinitely many numberings (modulo many-one equivalence) and
Selivanov \cite{Se1976} showed that under many-one numberings, uniformly
recursively enumerable classes of sets form a lattice only if they just
consist of one numbering (modulo many-one equivalence).

The second parallel is that
the proof methods of Theorem~\ref{thm-one} and~\ref{thm-two}
construct arrays of one-one degrees inside the many-one degree of $A$
by constructing families of invertible finite-one reductions to $A$ ---
these reductions are constructed without any knowledge about $A$ beyond
these: $A$ is neither finite nor cofinite nor a cylinder. Only the
verification which makes the functions $f_d$ with cofinite domain total
in the case that the underlying $\varphi_e$ is a one-one reduction between
$B_i$ and $B_j$ which should not exist, uses finite amount of knowledge
about $A$ in order to make the $f_d$ a total strictly ascending self-reduction.
Such a self-reduction only exists when $A$ is a cylinder, as that was
a priori excluded, there are no one-one reductions $\varphi_e$ between
distinct $B_i,B_j$ in Theorem~\ref{thm-one}. This type of handling objects
enumerated without really knowing what they are has also been the practice
in various papers within the theory of numberings, for example,
the work of Goncharov, Lempp and Solomon~\cite{GLS2002}.

In some cases, there are further parallels between notions in recursion
theory and notions in computational complexity and automata theory. For
that reason, one-one reducibility and many-one reducibility are also
considered in computational complexity. Many-one reductions
between recursive sets which are infinite and co-infinite
can be replaced by finite-one reductions, as one can always reduce
to the largest element or non-element of the target set which is already
computed by the given reduction; so if targets $y$ and $z$ are both known
to be inside or outside the set, one can replace a reduction to $y$ by
one to $z$. Therefore, it is natural to consider finite-one reductions
in computational complexity; Hemaspaandra and Hemaspaandra \cite{HH1994}
provided such work and furthermore, Lischke \cite{Li1975,Li1976,Li1977}
also investigated complexity measures and reducibilities between them.

\medskip
\noindent
{\bf Acknowledgements.}
The authors are grateful to Ilnur Batyrshin and Guohua Wu for pointing out
these two parallels in the study of reducibilities between sets on one hand
and those between numberings. Furthermore, they thank both and Bj\o rn
Kjos-Hanssen for detailed discussions and pointing to the following references
of prior work on the topic of finite-one reducibility
\cite{BMO2019,BL1977,Ci2026,GLS2002,HH1994,KW2021,Li1975,Li1976,Li1977,Ma1979}.

\end{document}